\theoremstyle{plain}
\newtheorem{proposition}{Proposition}
\newtheorem{theorem}{Theorem}
\newtheorem{corollary}{Corollary}
\newtheorem{conj}{Conjecture}
\newtheorem{prob}{Problem}
\theoremstyle{definition}
\newtheorem{defin}{Definition}
\newtheorem{question}{Question}
\newtheorem{ex}{Example}
\theoremstyle{remark}
\newtheorem{rem}[theorem]{Remark}
\numberwithin{equation}{section}
\begin{document}
	
	% \title[short text for running head]{full title}
	\title[On the quotient of Milnor and Tjurina numbers]{On the quotient of Milnor and Tjurina numbers for two-dimensional isolated hypersurface singularities}
	
	%    Only \author and \address are required; other information is
	%    optional.  Remove any unused author tags.
	
	%    author one information
	% \author[short version for running head]{name for top of paper}
	\author[P. Almir\'{o}n]{Patricio Almir\'{o}n}
	\address{Instituto de Matemática Interdisciplinar (IMI), Departamento de \'{A}lgebra, Geometr\'{i}a y Topolog\'{i}a\\
		Facultad de Ciencias Matem\'{a}ticas\\
		Universidad Complutense de Madrid\\
		28040, Madrid, Spain.
	}
	\email{palmiron@ucm.es}
	%\curraddr{}
	
	\thanks{The author is supported by Spanish Ministerio de Ciencia, Innovaci\'{o}n y Universidades MTM2016-76868-C2-1-P.   }
	%    author two information
	
	%    \subjclass is required.
	\subjclass[2010]{Primary 14H20, 14J17; Secondary 14H50, 32S05, 32S25}
	
	%\date{}
	
	%\dedicatory{}

	\begin{abstract}
		In this paper we give a complete answer to a question posed by Dimca and Greuel about the quotient of the Milnor and Tjurina numbers of a plane curve singularity. We put this question into a general framework of the study of the difference of Milnor and Tjurina numbers for isolated complete intersection singularities showing its connection with other problems in singularity theory.
		
	\end{abstract}
	\maketitle

\section{Introduction}

Let \((X,\mathbf{0})\subset(\mathbb{C}^N,0)\) be a germ of an isolated hypersurface singularity defined by an equation \(f\in\mathcal{O}_{(\mathbb{C}^N,0)}\). For such singularities there are two important invariants: the Milnor number \(\mu,\) and the Tjurina number \(\tau.\) Those numbers can be expressed as:

\[\mu:=\dim_{\mathbb{C}}\frac{\mathbb{C}\{x_1,\dots,x_N\}}{(\frac{\partial f}{\partial x_1},\dots ,\frac{\partial f}{\partial x_N})},\quad\tau:=\dim_{\mathbb{C}}\frac{\mathbb{C}\{x_1,\dots,x_N\}}{(f,\frac{\partial f}{\partial x_1},\dots ,\frac{\partial f}{\partial x_N})}.\]

By definition, it is trivial that \(\mu-\tau\geq 0\). In fact, it is a well known result by K. Saito \cite{saitohom} that \(\mu-\tau=0\) if and only if the hypersurface singularity is quasihomogeneous.

For isolated complete intersection singularities (ICIS) of dimension \(n=N-r\) defined by an ideal \(\mathcal{I}=(f_1,\dots,f_r)\), Hamm \cite[Satz 1.7]{hamm} was the first to show that the Milnor fiber of \((X,0)\) is homotopy equivalent to a bouquet of spheres, extending the previous results of Milnor \cite{milnorbook}. In contrast, the algebraic definition of the Milnor number of an ICIS through the partial derivatives of the defining equations is due to the works of Greuel \cite{greuel,greuel2} and L\^{e} \cite{lemilnor} independently.  On the other hand, Tjurina's work \cite{tjurinaoriginal} identifies \(\operatorname{Ext}^{1}_{\mathcal{O}_{(X,0)}}(\Omega^1_{(X,0)},\mathcal{O}_{(X,0)})\) as the base space of the miniversal deformation of a normal isolated singularity with \(\operatorname{Ext}^{2}_{\mathcal{O}_{(X,0)}}(\Omega^1_{(X,0)},\mathcal{O}_{(X,0)})=0\). In her honor, Greuel in \cite{greuel} named the dimension of that base space as Tjurina number. Thus, the Milnor and Tjurina numbers of an ICIS can be defined as

\[\mu:=\operatorname{rk} H_{n}(F),\quad \tau:=\dim_{\mathbb{C}}(\operatorname{Ext}^{1}_{\mathcal{O}_{(X,0)}}(\Omega^1_{(X,0)},\mathcal{O}_{(X,0)})),\]
where \(F\) is the Milnor fiber of the smoothing of \((X,0)\), \(\mathcal{O}_{(X,0)}=\mathbb{C}\{x_1,\dots,x_N\}/\mathcal{I}\) and \(\Omega^1_{(X,0)}\) is the corresponding module of differential \(1\)--forms at \((X,0)\). In this case, the inequality \(\mu-\tau\geq 0\) was proven by Greuel \cite{greuel} in case of dimension \(n=1\) and by Looijenga and Steenbrink \cite{looi} in dimension bigger or equal than \(2\).

Despite the results concerning the inequality \(\mu-\tau\geq 0\), very few is known in the literature concerning sharp upper bounds for \(\mu-\tau\) of the form \(C\mu\) with \(C\in\mathbb{Q}.\) In the hypersurface case, as far as the author knows, Liu \cite{liu} is the first who provided some bounds of this type (see for example Proposition \ref{prop:liu}). One of the goals of this paper is to motivate the study of the following problem by showing its connection with other problems in singularity theory:
\begin{prob}\label{problem}
	Let \((X,0)\subset(\mathbb{C}^N,0)\) be an isolated complete intersection singularity of dimension \(n\) and codimension \(r=N-n\). Is there an optimal \(\frac{b}{a}\in\mathbb{Q}\) with \(b<a\) such that 
	\[\mu-\tau<\frac{b}{a}\mu\;\text{?}\]
	Where optimal means that there exist a family of singularities such that \(\mu/\tau\)  tends to \(\frac{a}{a-b}\) when the multiplicity at the origin tends to infinity.
\end{prob}
The main results of this paper are a complete answer to this Problem in the case \(N=2,\,r=1\) and partial answers in the cases \(N=3,\,r=1\) and \(r=N-1\) with arbitrary \(N\).

The case of plane curve singularities, i.e. \(N=2,\,r=1\) connects our problem with the following question posed by Dimca and Greuel in 2017:
\begin{question}\cite[Question 4.2]{dim} \label{conjecture}
	Is it true that $\mu/\tau < 4/3$ for any isolated plane curve singularity?
\end{question}
The guessed bound is inferred by Dimca and Greuel from some families of plane curve singularities that asymptotically achieve the \(4/3\) bound (see \cite[Ex. 4.1]{dim}). From this point of view, Question \ref{conjecture} can be split into two questions: Is it true? If it is true, is there an intrinsic reason for the \(4/3\) bound?

There are some partial answers to Dimca and Greuel's Question \ref{conjecture} given by Blanco and the author \cite{alblanc}, Alberich-Carrami\~{n}ana, Blanco, Melle-Hern\'{a}ndez and the author \cite{taumin}, Genzmer and Hernandes \cite{genzmertau} and Wang \cite{wang} (see Section \ref{sec:dgprob} for a more detailed description of those results). All of them show that \(\mu/\tau\) satisfies the \(4/3\) bound for some special families of plane curve singularities. Despite those partial positive answers to Question \ref{conjecture}, there is no clue in Wang's results \cite{wang} nor in Genzmer-Hernandes \cite{genzmertau} neither in our first results \cite{alblanc,taumin} as to whether the numbers \(3,4\) can be inferred from deeper arguments than just explicit computations, i.e. is there an intrinsic reason to consider the invariant \(3\mu-4\tau\) instead of any other combination of the type \(a\mu-b\tau\) with \((a,b)\neq (3,4)?\)

Here we are not only going to give a positive answer to Dimca and Greuel's Question \ref{conjecture} in its full generality but also we are going to provide an intrinsic reason for the \(4/3\) bound. Our answer to Question \ref{conjecture} (Theorem \ref{teorema4/3}) solve both questions through the study of \(\mu/\tau\) for the normal two-dimensional double point \(\{z^2=f(x,y)\}.\) In particular, our proof (Theorem \ref{teorema4/3}) shows that the \(4/3\) bound is inferred from the restriction for the number of adjunction conditions of a normal two-dimensional double point singularity. Thus, we provide a solution to Problem \ref{problem} in the case \(N=2,\,r=1\). Moreover, as one can see, our point of view is completely new from the techniques used in \cite{taumin,alblanc,genzmertau,wang} to solve Question \ref{conjecture} for some special cases.

As a consequence of our approach, we can use a result of Teissier \cite{teissier-appendix} to show that if \((C,\mathbf{0})\) is an irreducible germ of curve, not necessarily plane, with the semigroup of a plane curve singularity (See Sec. \ref{sec:monomialcurve} for a precise definition) then \(\mu-\tau<\mu/4\) (Corollary \ref{cor:mu4space}). This fact constitutes a partial answer to Problem \ref{conjecture} in the case \(r=N-1\) with arbitrary \(N\).

To finish, we move to the case \(N=3\) and \(r=1\). In this case, Wahl \cite{wahl} proves that \(\mu-\tau\leq 2p_g\), where \(p_g\) is the geometric genus of the singularity, i.e. if \(\widetilde{X}\rightarrow X\) is a resolution of singularities of \(X,\) \(p_g:=\dim_{\mathbb{C}} H^{1}(\widetilde{X},\mathcal{O}_{\widetilde{X}}).\) This bound connects Problem \ref{problem} with the following long standing and widely studied conjecture posed by Durfee in 1978:
\begin{conj}\cite[Conjecture 5.3]{durfee}\label{durfeeconj}
	For any isolated hypersurface singularity \((X,0)\subset(\mathbb{C}^3,0)\)
	\[6p_g\leq\mu.\]
\end{conj}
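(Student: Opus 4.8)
The plan is to translate Conjecture \ref{durfeeconj} into a statement about the singularity spectrum of $f$ and the mixed Hodge structure on the vanishing cohomology, since both $\mu$ and $p_g$ can be read off from the spectral numbers. For the surface case $n=2$, the $\mu$ spectral numbers $\alpha_1,\dots,\alpha_\mu$ of the isolated singularity $f\colon(\mathbb{C}^3,0)\to(\mathbb{C},0)$ lie in $(0,3)$ and are symmetric under $\alpha\mapsto 3-\alpha$. By Steenbrink's description of the geometric genus through the Hodge filtration one has
\[p_g=\#\{\,i:\alpha_i\leq 1\,\},\]
and the symmetry gives $\#\{i:\alpha_i\geq 2\}=p_g$, whence
\[\mu=2p_g+\#\{\,i:1<\alpha_i<2\,\}.\]
Consequently the bound $6p_g\leq\mu$ is \emph{equivalent} to the combinatorial estimate
\[\#\{\,i:1<\alpha_i<2\,\}\ \geq\ 4\,\#\{\,i:\alpha_i\leq 1\,\},\]
that is, the central strip $(1,2)$ of the spectrum must carry at least four times as many spectral numbers as the initial strip $(0,1]$.

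First I would settle the weighted homogeneous case, where the spectrum is explicit in the weights: for a Brieskorn polynomial $\sum_i x_i^{a_i}$ it is the multiset $\{\sum_i k_i/a_i : 1\leq k_i\leq a_i-1\}$, and the reformulated inequality can be verified directly. It is exactly these examples that force the constant $6$ and produce the extremal families. To reach an arbitrary singularity I would attempt to deform it to a weighted homogeneous model and invoke the Varchenko--Steenbrink semicontinuity of the spectrum, which governs how the $\alpha_i$ may migrate between the strips $(0,1]$, $(1,2)$ and $[2,3)$ under deformation. In parallel, the framework of this paper suggests combining Wahl's inequality $\mu-\tau\leq 2p_g$ with Problem \ref{problem}; observe, however, that $6p_g\leq\mu$ together with $\mu-\tau\leq 2p_g$ forces $\mu-\tau\leq\tfrac13\mu$, so a bound of that shape is a \emph{consequence} of the conjecture that tests it in the surface case but cannot by itself establish it.

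The main obstacle is precisely the central-strip estimate. Semicontinuity controls the \emph{variation} of the $\alpha_i$ under deformation but does not pin down their absolute distribution, and Wahl's inequality bounds $p_g$ from \emph{below}, which is the wrong direction for the upper bound $\mu\geq 6p_g$. The genuine difficulty is therefore to produce a lower bound for $\#\{\,i:1<\alpha_i<2\,\}$ in terms of $\#\{\,i:\alpha_i\leq 1\,\}$ valid for \emph{all} singularities, which is exactly the content that has kept Conjecture \ref{durfeeconj} open in general. Realistically one can hope only for partial results, for instance for the weighted homogeneous class, for singularities of small modality, or for those whose defect $\mu-\tau$ is controlled by the methods developed here.
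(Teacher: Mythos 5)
You were asked to prove a statement that the paper itself does not prove: Conjecture \ref{durfeeconj} is Durfee's conjecture of 1978, quoted verbatim from \cite{durfee}, and it is open. The paper only records the known partial cases (multiplicity \(2\) and \(3\), quasi-homogeneous, suspensions, absolutely isolated singularities, integral homology sphere links, positive Euler characteristic of the exceptional divisor), uses them in Proposition \ref{boundsurface}, and otherwise treats the conjecture as a hypothesis: Proposition \ref{prop:durf3} derives Conjecture \ref{conjetura3/2} \emph{from} Durfee's conjecture, not the other way around. So there is no proof in the paper to compare yours against, and no blind attempt could legitimately have closed the question.

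Within that constraint, your analysis is mathematically sound but is not a proof, as you yourself concede. The spectral reformulation is correct: with the spectrum of \(f\colon(\mathbb{C}^3,0)\to(\mathbb{C},0)\) lying in \((0,3)\), symmetric under \(\alpha\mapsto 3-\alpha\), and \(p_g=\#\{i:\alpha_i\le 1\}\) (Steenbrink), one gets \(\mu=2p_g+\#\{i:1<\alpha_i<2\}\), so \(6p_g\le\mu\) is equivalent to \(\#\{i:1<\alpha_i<2\}\ge 4p_g\); this restatement is essentially the point of view taken in the Kerner--N\'{e}methi papers \cite{kerner1,kerner2,kerner3} cited here. Your two negative observations are also right: Wahl's inequality \(\mu-\tau\le 2p_g\) bounds \(p_g\) from \emph{below}, which is useless for \(6p_g\le\mu\); and semicontinuity of the spectrum controls how spectral numbers move under deformation but gives no absolute lower bound on the central strip, so deforming to a weighted homogeneous model cannot conclude. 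The genuinely missing ingredient is exactly the one you name---a universal lower bound for \(\#\{i:1<\alpha_i<2\}\) in terms of \(p_g\)---and that is precisely the open content of the conjecture. Had your goal been to reproduce the paper's handling of this statement, the correct move was not to attempt a proof but to state it as a conjecture, survey the families where it is known, and derive its consequences for \(\mu/\tau\), which is what the paper does.
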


As we will see in Section \ref{sec:durfee}, the cases where Durfee's conjecture holds motivate us to propose \(\mu/3\) as the optimal bound for the case  \(N=3\) and \(r=1\) of Problem \ref{problem}(Proposition \ref{boundsurface}). We conjecture that this bound is true for any isolated hypersurface singularity in \(\mathbb{C}^3\) (Conjecture \ref{conjetura3/2}). Moreover, Durfee's conjecture \ref{durfeeconj} implies Conjecture \ref{conjetura3/2}. In this way, Conjecture \ref{conjetura3/2} provides an easy criterion to check the validity of Durfee's conjecture \ref{durfeeconj}. 

From this point of view, we think that the general setting which provides Problem \ref{problem} can be useful in the understanding and resolution of other problems in singularity theory.

\vskip 4mm
\textbf{Acknowledgments.} The author would like to thank Maria Alberich-Carrami\~{n}ana and Alejandro Melle-Hern\'{a}ndez for their constant support and the helpful comments and suggestions.

\section{Remarks on the difference \(\mu-\tau\)}\label{sec:poincare}

Let \((X,\mathbf{0})\) be a germ of isolated hypersurface singularity. Following \cite{yau}, let 
\[0\rightarrow\mathbb{C}\rightarrow\mathcal{O}_{(X,\mathbf{0})}\xrightarrow{d^{0}}\Omega^{1}_{(X,\mathbf{0})}\xrightarrow{d^{1}}\Omega^{2}_{(X,\mathbf{0})}\rightarrow\cdots \]
be the Poincar\'{e} complex at \(\mathbf{0}\), where \(\Omega^{p}_{(X,\mathbf{0})}\) is the sheaf of differential \(p\)--forms and \(d^p\) the usual differential operator. The Poincar\'{e} numbers of \(X\) at \(\mathbf{0}\) are defined as 
\[p^{(i)}:=\dim_{\mathbb{C}}\frac{\operatorname{Ker}d^i}{\operatorname{Im}d^{i-1}}\quad\text{for all}\; i\geq 0.\]
If \((X,\mathbf{0})\) is a hypersurface singularity of dimension \(n\) then Brieskorn \cite{brieskorn} proved \(p^{(i)}=0\) if \(i\leq n-2\) and Sebastiani \cite{sebastiani} proved \(p^{(n-1)}=0\). After that, Saito \cite{saitohom} proved that the Poincar\'{e} complex is exact if and only if the singularity is quasihomogeneous. From this, one has
\begin{theorem}\cite{saitohom}
	If \((X,\mathbf{0})\) is a germ of an isolated hypersurface singularity then \(p^{(n)}=\mu-\tau\). Even more, \(p^{(n)}=0\) if and only if \((X,\mathbf{0})\) is quasihomogeneous.
\end{theorem}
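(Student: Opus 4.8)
The plan is to establish the theorem in two logically distinct parts corresponding to its two assertions. The first assertion is the numerical identity $p^{(n)}=\mu-\tau$, and the second is the characterization that $p^{(n)}=0$ if and only if $(X,\mathbf{0})$ is quasihomogeneous. Since the second assertion follows immediately once the first is in hand by combining it with Saito's cited result that the Poincar\'{e} complex is exact precisely when the singularity is quasihomogeneous (and exactness at the top is equivalent to $p^{(n)}=0$ given that all lower Poincar\'{e} numbers vanish), the crux of the proof is the dimension count yielding $p^{(n)}=\mu-\tau$.

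For the identity, I would first record the vanishing results already assembled in the excerpt: Brieskorn's $p^{(i)}=0$ for $i\leq n-2$ and Sebastiani's $p^{(n-1)}=0$. These reduce the problem to understanding the cokernel of the last meaningful differential. Concretely, I would examine the tail of the Poincar\'{e} complex near degree $n$ and express
\[
p^{(n)}=\dim_{\mathbb{C}}\frac{\operatorname{Ker}d^{n}}{\operatorname{Im}d^{n-1}}
\]
by relating $\Omega^{n}_{(X,\mathbf{0})}$ and $\Omega^{n-1}_{(X,\mathbf{0})}$ to the Jacobian algebra. The key computational input is that, for an isolated hypersurface singularity of dimension $n=N-1$ defined by $f$, the top differential forms $\Omega^{n}_{(X,\mathbf{0})}$ are governed by the Brieskorn lattice / the module $\Omega^{N}_{(\mathbb{C}^N,0)}/(df\wedge\Omega^{N-1})$, whose relevant quotient has dimension $\mu$, while the image of $d^{n-1}$ accounts for a codimension-$\tau$ piece. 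The plan is to make this precise by identifying $\operatorname{Coker}d^{n-1}$ with the quotient of the Milnor algebra by the additional relation coming from $f$ itself, so that the Milnor number $\mu$ (dimension of the Jacobian algebra) drops to the Tjurina number $\tau$ (dimension of the Tjurina algebra, which further quotients by $(f)$) exactly accounting for the difference $\mu-\tau$.

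The main obstacle I anticipate is handling the interaction between $\operatorname{Ker}d^{n}$ and $\operatorname{Im}d^{n-1}$ cleanly, because $\Omega^{n}_{(X,\mathbf{0})}$ need not be torsion-free and one must track torsion submodules carefully when passing from forms on $\mathbb{C}^N$ to forms on $X$. I would control this by working with the exact sequence relating $\Omega^{\bullet}_{(X,\mathbf{0})}$ to $\Omega^{\bullet}_{(\mathbb{C}^N,0)}$ via the conormal sequence $df$, and by invoking the isolatedness of the singularity to guarantee that the relevant modules are finite-dimensional, so that all the dimension counts are legitimate. Once the tail is resolved, I would assemble the Euler-characteristic-style bookkeeping: the alternating sum of dimensions in the truncated complex, combined with the vanishing of all lower $p^{(i)}$, isolates $p^{(n)}$ as precisely the defect $\mu-\tau$. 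Finally, for the \emph{if and only if} statement, I would simply note that quasihomogeneity forces exactness (hence $p^{(n)}=0$) by Saito, and conversely $p^{(n)}=0$ forces $\mu=\tau$, which by K. Saito's theorem quoted in the introduction is equivalent to quasihomogeneity, closing the equivalence.
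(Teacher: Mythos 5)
A preliminary remark on the comparison itself: the paper offers no proof of this statement. It is a background theorem quoted from Saito's paper \cite{saitohom}; the phrase ``From this, one has'' in the text refers to the cited results of Brieskorn (\(p^{(i)}=0\) for \(i\leq n-2\)), Sebastiani (\(p^{(n-1)}=0\)) and Saito, not to an argument carried out in the paper. So your proposal has to be judged as an attempted reconstruction of the literature proof, not against an argument the paper actually contains.

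Judged on those terms, your handling of the second assertion is essentially correct, but the first assertion --- which you rightly identify as the crux --- is not proven; it is restated. The reduction ``\(p^{(n)}=0\) iff quasihomogeneous'' to ``\(p^{(n)}=\mu-\tau\) plus K. Saito's theorem that \(\mu=\tau\) iff the singularity is quasihomogeneous'' is valid, and it is the right route (your alternative phrasing via exactness of the whole complex needs the extra observation that the complex does not stop at \(\Omega^{n}_{(X,\mathbf{0})}\): it continues to \(\Omega^{n+1}_{(X,\mathbf{0})}\cong\mathbb{C}\{x_1,\dots,x_{n+1}\}/(f,J_f)\), the Tjurina algebra, so full exactness also involves \(p^{(n+1)}\); this term does vanish, but only after noting that \(d^{n}\) is surjective, which you do not address). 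The genuine gap is in the identity \(p^{(n)}=\mu-\tau\) itself. The steps ``the image of \(d^{n-1}\) accounts for a codimension-\(\tau\) piece'' and ``assemble the Euler-characteristic-style bookkeeping'' are the conclusion of the theorem rephrased, not arguments; moreover the bookkeeping cannot even be set up as described, because the modules \(\Omega^{p}_{(X,\mathbf{0})}\) with \(p\leq n\) are infinite-dimensional over \(\mathbb{C}\) --- isolatedness makes only their torsion submodules and the last term \(\Omega^{n+1}_{(X,\mathbf{0})}\) finite-dimensional --- so there is no alternating sum of dimensions to take. What actually makes the computation of \(\dim_{\mathbb{C}}\operatorname{Ker}d^{n}/\operatorname{Im}d^{n-1}\) possible is the de Rham--Saito division lemma: for an isolated singularity the partial derivatives form a regular sequence, hence the Koszul complex given by wedging with \(df\) is exact in degrees \(\leq n\), i.e. \(df\wedge\omega=0\) forces \(\omega=df\wedge\eta\). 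This lemma (together with Brieskorn's identification \(\Omega^{n+1}_{(\mathbb{C}^{n+1},0)}/df\wedge\Omega^{n}_{(\mathbb{C}^{n+1},0)}\cong\mathbb{C}\{x_1,\dots,x_{n+1}\}/J_f\), of dimension \(\mu\)) is what allows one to pin down representatives of classes in \(\operatorname{Ker}d^{n}/\operatorname{Im}d^{n-1}\) and match them with an ideal-theoretic quotient such as \(\bigl((f)+J_f\bigr)/J_f\), whose dimension is exactly \(\mu-\tau\). That tool appears nowhere in your outline, and without it (or an equivalent input) the proposed dimension count does not get off the ground.
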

In 1975, Greuel \cite{greuel2} extended the results of Brieskorn and Sebastiani to the case of isolated complete intersection of any dimension and proved that if the singularity is quasihomogeneous then \(\mu=\tau\). The converse was proven by Wahl \cite{wahl} in the case of dimension \(2\) and for any dimension by Vosegaard \cite{Vosegaard}. However, in this full generality the exactness of the Poincar\'{e} complex does not imply the quasihomogeneity of the singularity as Pfister and Sch\"{o}nemann show in \cite{pfister}.  In this way, the difference \(\mu-\tau\) or alternatively the quotient \(\mu/\tau\) must be considered as a measure of the non-quasihomogeneity of the singularity and not as a measure of the exactness of the Poincar\'{e} complex. 

\begin{rem}
	We refer to \cite[Sections 7.2.4, 7.2.6]{greuel-survey} for a survey by Greuel on Milnor versus Tjurina number, not only for complete intersections, but also for reduced curve singularities.
\end{rem}

To give formulas for the difference \(\mu-\tau\) in terms of other invariants of the singularity is, in general, a difficult task. For example, for isolated complete intersections of dimension \(n\geq 2\), in 1985 \cite{looi} Looijenga and Steenbrink give a precise formula for this difference in terms of the mixed Hodge structure of the singularity:

\begin{theorem}\cite{looi}\label{teolooi}
	If \((X,x)\) is an isolated complete intersection singularity of dimension \(n\geq 2,\) then \[\mu-\tau=\sum_{p=0}^{n-2}h^{p,0}(X,x)+a_1+a_2+a_3,\]
	where \(h^{p,q}(X,x)\) denotes the \((p,q)\)--Hodge number of the mixed Hodge structure which is naturally defined on the local cohomology group \(H^{n}(X,X-\{x\};\mathbb{C})\) and the numbers \(a_1,a_2,a_3\) are nonnegative invariants of a resolution of \((X,x).\)
\end{theorem}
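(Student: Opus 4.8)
The plan is to realise both $\mu$ and $\tau$ as dimensions of explicit graded pieces of the two mixed Hodge structures attached to the germ: the limit (vanishing) mixed Hodge structure on the reduced cohomology $\tilde H^{n}(F;\mathbb{C})$ of the Milnor fibre $F$ of a smoothing, and the mixed Hodge structure on the local cohomology $H^{n}(X,X\setminus\{x\};\mathbb{C})$, and then to read off their difference. By Hamm's theorem $F$ is homotopy equivalent to a bouquet of $\mu$ spheres of dimension $n$, so $\mu=\dim_{\mathbb{C}}\tilde H^{n}(F)$ and $\tilde H^{i}(F)=0$ for $i\neq n$. The monodromy $T=T_{s}T_{u}$ splits $\tilde H^{n}(F)=H_{1}\oplus H_{\neq 1}$ into its eigenvalue-one part and the rest, and on $H_{1}$ one writes $T_{u}=\exp N$ with $N$ the logarithm of the unipotent part. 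Since $\tilde H^{n-1}(F)=0$ for $n\geq 2$ and $X$ is contractible, the long exact sequence of the pair together with the Wang sequence of the Milnor fibration identifies $H^{n}(X,X\setminus\{x\};\mathbb{C})\cong \tilde H^{n-1}(L)$ with (a Tate twist of) $\ker\big(N\colon H_{1}\to H_{1}\big)$ as mixed Hodge structures, where $L$ is the link.

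First I would pin down a Hodge-theoretic expression for $\tau$. I would use the description of the Tjurina number as $\tau=\dim_{\mathbb{C}}\operatorname{Ext}^{1}_{\mathcal{O}_{X}}(\Omega^{1}_{X},\mathcal{O}_{X})$ and compare it, through the Gauss--Manin system and the Brieskorn lattice of the ICIS, with a prescribed level of the Hodge filtration $F^{\bullet}$ on $\tilde H^{n}(F)$. The target is an identity of the form $\tau=\mu-\dim_{\mathbb{C}}G$ for an explicit graded quotient $G$ of $\tilde H^{n}(F)$ built from $F^{\bullet}$; this is the ICIS analogue of Saito's identity $p^{(n)}=\mu-\tau$ in the hypersurface case, and it is precisely the place where the non-quasihomogeneous part of the singularity enters, so that $G=0$ recovers quasihomogeneity.

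Next I would compute $\mu-\tau=\dim_{\mathbb{C}}G$ by decomposing $G$ along $\tilde H^{n}(F)=H_{1}\oplus H_{\neq 1}$ and along the monodromy weight filtration. The graded pieces that survive to $\ker N$ on $H_{1}$ match, after the Tate twist above, the Hodge numbers $h^{p,0}(X,x)$ of the local cohomology for $p=0,\dots,n-2$, producing the sum $\sum_{p=0}^{n-2}h^{p,0}(X,x)$. The remaining contributions come from the part of $G$ lying in $H_{\neq 1}$ and from the pieces of $H_{1}$ that are destroyed on passing to $\ker N$ (those in $\operatorname{im}N$ and in the top weight-graded quotients); I would collect these into three groups, define $a_{1},a_{2},a_{3}$ to be their dimensions, and then show each is non-negative and intrinsic by re-expressing it on a good resolution $\pi\colon\tilde X\to X$, comparing Steenbrink's limit mixed Hodge structure with the mixed Hodge structure read off from the exceptional divisor.

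The hard part will be this last matching: proving that the residual graded pieces of $G$ coincide with genuine resolution invariants and, in particular, that $a_{1},a_{2},a_{3}\geq 0$. This forces one to control simultaneously the Hodge filtration $F^{\bullet}$ and the weight filtration on $\tilde H^{n}(F)$ and to track the $(-1,-1)$ shift of $N$ against $F^{\bullet}$. The non-negativity is not formal; it rests on the strictness and semipurity of the limit mixed Hodge structure and on confronting the two independent computations of $H^{n}(X,X\setminus\{x\})$ --- one from the Milnor fibre, one from the resolution --- which is the genuinely geometric input of the argument.
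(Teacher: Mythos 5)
A preliminary remark: the paper does not prove Theorem \ref{teolooi} at all --- it quotes it verbatim from Looijenga and Steenbrink \cite{looi} --- so your proposal can only be compared with the original argument of \cite{looi}, not with anything in this paper.

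Measured against that, your plan has a genuine gap, and it sits exactly at the step you yourself call ``the target'': the identity $\tau=\mu-\dim_{\mathbb{C}}G$ for a graded quotient $G$ of $\widetilde{H}^{n}(F;\mathbb{C})$ built from the Hodge filtration $F^{\bullet}$ (together with the eigenspace decomposition and the weight filtration) of the limit mixed Hodge structure of the smoothing. No such identity can exist, because $\tau$ is not an invariant of that mixed Hodge structure. Concretely, let $g_{t}=x^{4}+y^{6}+t\,x^{2}y^{4}$ and $X_{t}=\{z^{2}+g_{t}(x,y)=0\}\subset(\mathbb{C}^{3},0)$, an isolated hypersurface singularity of dimension $n=2$. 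Since $x^{2}y^{4}$ has weighted degree $7/6>1$ for the weights $(1/4,1/6)$, the family $g_t$ is $\mu$-constant with $\mu(g_t)=15$; by Thom--Sebastiani $\mu(X_{t})=\mu(g_{t})=15$, the monodromy of $X_t$ is independent of $t$ (the topological type of the curves $g_t$ is constant, by L\^e--Ramanujam in the curve case), and the spectrum is independent of $t$ by Varchenko's $\mu$-constant theorem; hence every dimension built from $(F^{\bullet},W_{\bullet},T)$ on $\widetilde{H}^{2}(F)$ is the same for all $t$. On the other hand, the Tjurina-ideal computation used in the proof of Theorem \ref{teorema4/3} gives $\tau(X_{t})=\tau(g_{t})$, and $\tau(g_{0})=15$ while $\tau(g_{t})=14$ for $t\neq0$: the relation $\frac{1}{4}x\,\partial_{x}g_{t}+\frac{1}{6}y\,\partial_{y}g_{t}=g_{t}+\frac{t}{6}x^{2}y^{4}$ shows $g_{t}\equiv-\frac{t}{6}x^{2}y^{4}$ modulo the Jacobian ideal, and $x^{2}y^{4}$ is a basis monomial of the Milnor algebra, so $g_{t}\notin J(g_{t})$ and $g_t$ is not quasihomogeneous by Saito's criterion \cite{saitohom}. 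So $\mu-\tau$ jumps from $0$ to $1$ in a family where $\dim_{\mathbb{C}}G$ cannot change. Your appeal to Saito's identity $p^{(n)}=\mu-\tau$ conflates two different objects: $p^{(n)}$ is defined through the Poincar\'e complex of differential forms on $X$ itself, not through $F^{\bullet}$ on $\widetilde{H}^{n}(F)$. Since all your subsequent steps (matching graded pieces of $G$ with $h^{p,0}(X,x)$, defining $a_{1},a_{2},a_{3}$ as dimensions of residual pieces of $G$) operate on $G$, the construction collapses with it.

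This is exactly what the proof in \cite{looi} is engineered to avoid: there $\tau$ enters through Greuel's coherent description \cite{greuel,greuel2} in terms of the modules $\Omega^{p}_{X,x}$, their torsion and the dualizing module $\omega_{X,x}$ --- analytic data of $(X,x)$ invisible to the limit mixed Hodge structure --- and only afterwards is this compared, via a resolution and Steenbrink's results on the mixed Hodge structure of the local cohomology $H^{n}(X,X\setminus\{x\};\mathbb{C})$ (semipurity in particular), with Hodge-theoretic invariants; it is this comparison that produces $\sum_{p=0}^{n-2}h^{p,0}(X,x)$ and the manifestly nonnegative resolution terms $a_{1},a_{2},a_{3}$. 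The second half of your sketch --- identifying $H^{n}(X,X\setminus\{x\})$ with $\ker(T-\mathrm{id})$ via the Wang sequence, and obtaining nonnegativity from semipurity on a good resolution --- is in the right spirit and has a counterpart in \cite{looi}; what is missing, and cannot be supplied within your framework, is a coherent, rather than Hodge-theoretic, expression for $\tau$.
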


Despite this formula, not so much can be said about upper bounds for \(\mu-\tau\) as the one proposed in Problem \ref{problem}. By using purely algebraic methods, a first approach is to use the Brian\c{c}on--Skoda Theorem \cite{brianon} as it was showed by Liu in \cite{liu}:
\begin{proposition}\cite[Theorem 1.1]{liu}\label{prop:liu}
	For any germ of isolated hypersurface singularity defined by a germ of function \(f:\mathbb{C}^n\rightarrow\mathbb{C}\) we have \[\mu-\tau\leq \frac{n-1}{n}\mu.\] 
\end{proposition}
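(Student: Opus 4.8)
The plan is to recast the inequality as a bound on the Jordan structure of a single nilpotent operator, where the only substantive input is the Briançon–Skoda theorem. First I would rewrite the target in a more convenient form: clearing denominators, $\mu-\tau\le\frac{n-1}{n}\mu$ is equivalent to $\tau\ge\frac1n\mu$, that is, to
\[\mu\le n\tau.\]
So it suffices to establish this last inequality.

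Set $M:=\mathcal{O}/J$, where $J=(\partial f/\partial x_1,\dots,\partial f/\partial x_n)$ denotes the Jacobian ideal, so that $\dim_{\mathbb{C}}M=\mu$. Since the singularity is isolated, $J$ is $\mathfrak{m}$-primary and $M$ is a finite-dimensional $\mathbb{C}$-vector space. Let $\phi\colon M\to M$ be multiplication by $f$. A short computation identifies the cokernel, $M/\phi M=\mathcal{O}/(f,J)$, whence $\dim_{\mathbb{C}}(M/\phi M)=\tau$. Thus $\mu$ is the dimension of $M$ and $\tau$ is the dimension of the cokernel of $\phi$, and the whole problem is reduced to comparing these two numbers for a single endomorphism.

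The role of the Briançon–Skoda theorem is to control the nilpotency index of $\phi$. For an isolated singularity one has $f\in\overline{J}$, the integral closure of the Jacobian ideal: this follows from the \L{}ojasiewicz-type inequality $|f|\le C\,|x|\,|\nabla f|$, which in fact gives $f\in\overline{\mathfrak{m}J}\subseteq\overline{J}$. Using $\overline{J}^{\,n}\subseteq\overline{J^{\,n}}$ one then gets $f^{n}\in\overline{J^{\,n}}$, and the Briançon–Skoda theorem asserts $\overline{J^{\,n}}\subseteq J$. Hence $f^{n}\in J$, i.e. $\phi^{n}=0$ on $M$.

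Finally I would convert ``$\phi$ nilpotent of index at most $n$ with $\dim_{\mathbb{C}}(M/\phi M)=\tau$'' into the bound $\mu\le n\tau$ by the elementary linear algebra of nilpotent operators over $\mathbb{C}$. The number of Jordan blocks of $\phi$ equals $\dim_{\mathbb{C}}\ker\phi=\dim_{\mathbb{C}}(M/\phi M)=\tau$, while $\phi^{n}=0$ forces each block to have size at most $n$; summing the block sizes gives $\mu=\dim_{\mathbb{C}}M\le n\tau$. (Alternatively, the filtration $M\supseteq\phi M\supseteq\cdots\supseteq\phi^{n}M=0$ has successive quotients of non-increasing dimension, each at most $\dim_{\mathbb{C}}(M/\phi M)=\tau$, and there are at most $n$ of them.) I expect the only genuinely delicate point to be the Briançon–Skoda step, and more precisely the verification that $f\in\overline{J}$; once $\phi^{n}=0$ is in hand the remaining estimate is a one-line count, and both the reduction to $\mu\le n\tau$ and the identification $\tau=\dim_{\mathbb{C}}(M/\phi M)$ are purely formal.
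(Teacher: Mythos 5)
Your proposal is correct and follows essentially the same route as the proof the paper cites (Liu's, via the Brian\c{c}on--Skoda theorem): the reduction to $\mu\le n\tau$, the membership $f^{n}\in J$ from Brian\c{c}on--Skoda, and the counting via the filtration $M\supseteq \phi M\supseteq\cdots\supseteq\phi^{n}M=0$ (equivalently your Jordan-block count) is exactly the standard argument. No gaps; the Jordan-form phrasing is just a repackaging of the ideal-theoretic filtration $(J,f)\supseteq (J,f^{2})\supseteq\cdots\supseteq(J,f^{n})=J$ used in the cited proof.
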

However, this bound is far from being sharp as we will show with Theorem \ref{teorema4/3} (see Sec.\ref{sec:dgprob}).

In the case of isolated complete intersection singularities of dimension \(2\), an alternative formula to the previous one of Looijenga and Steenbrink (Theorem \ref{teolooi}) was proven by Wahl \cite{wahl}. From his formula, Wahl can obtain the following upper bound.
\begin{theorem} \cite[Cor. 2.9]{wahl}
	\label{wahl} Let \((X,0)\) be a germ of isolated complete intersection singularity of dimension \(2\). Then
	\[\mu-\tau\leq 2p_g-\dim H^1(A;\mathbb{C}),\] 
	where \(A\) is the exceptional divisor of a minimal good resolution of \(X.\) 
\end{theorem}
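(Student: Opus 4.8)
The plan is to pass to the minimal good resolution $\pi\colon(\widetilde{X},A)\to(X,0)$ and to read off both sides of the inequality from the mixed Hodge structure carried by the vanishing cohomology $H^2(F;\mathbb{C})$ of the Milnor fibre $F$ of a smoothing, where $\mu=\dim_{\mathbb{C}}H^2(F)$. Writing $h^{p,q}=\dim_{\mathbb{C}}\operatorname{Gr}^W_{p+q}\operatorname{Gr}_F^pH^2(F)$ for the mixed Hodge numbers, I would first record the two geometric identifications that carry the whole argument: by Steenbrink's theory the geometric genus is the dimension of the top piece of the Hodge filtration, $p_g=\dim_{\mathbb{C}}F^2H^2(F)=h^{2,2}+h^{2,1}+h^{2,0}$, while the exceptional locus contributes $\dim_{\mathbb{C}}H^1(A;\mathbb{C})=2\sum_ig(A_i)+b_1(\Gamma)$ classes (with $\Gamma$ the dual graph) living in the weight-two graded piece $\operatorname{Gr}^W_2H^2(F)$.

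The main tool is Theorem \ref{teolooi} in the case $n=2$, which gives
\[\mu-\tau=h^{0,0}(X,0)+a_1+a_2+a_3,\]
a sum of \emph{nonnegative} invariants, where $h^{0,0}(X,0)$ is a Hodge number of the local cohomology $H^2(X,X\setminus\{0\};\mathbb{C})$ and $a_1,a_2,a_3$ are resolution invariants. The next step is to make these abstract invariants concrete: using the duality between $H^2(X,X\setminus\{0\})$ and $H^2(F)$, together with Steenbrink's explicit description of the weight and Hodge filtrations, each of $h^{0,0}(X,0),a_1,a_2,a_3$ is rewritten as a combination of mixed Hodge numbers $h^{p,q}$.

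With this dictionary in hand, the heart of the proof is a bookkeeping computation. Hodge symmetry inside each pure graded piece, $h^{p,q}=h^{q,p}$ for $p+q$ fixed, pairs the $(2,1)$ part with the $(1,2)$ part and the $(2,0)$ part with the $(0,2)$ part, so that the contributions of weights $\geq 3$ together with the $(2,0)+(0,2)$ part of weight two assemble --- up to the weight-four term $h^{2,2}$ --- into $2p_g$. The classes of the exceptional curves occupy a subspace of the $(1,1)$ part of $\operatorname{Gr}^W_2H^2(F)$ of dimension $\dim_{\mathbb{C}}H^1(A;\mathbb{C})$, and precisely this subspace does \emph{not} contribute to $\mu-\tau$; this accounts for the negative term $-\dim_{\mathbb{C}}H^1(A;\mathbb{C})$. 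Discarding whichever of $a_1,a_2,a_3$ remains after the matching --- all of them being nonnegative --- turns the resulting identity into the desired inequality $\mu-\tau\leq 2p_g-\dim_{\mathbb{C}}H^1(A;\mathbb{C})$.

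The hard part will be the second and third steps: pinning down exactly which mixed Hodge numbers the Looijenga--Steenbrink invariants $a_1,a_2,a_3$ and $h^{0,0}(X,0)$ correspond to, and then verifying that the weight-two $(1,1)$-classes coming from the exceptional divisor are genuinely absent from $\mu-\tau$ with the full multiplicity $\dim_{\mathbb{C}}H^1(A;\mathbb{C})$. A related subtlety is the non-unipotent part of the monodromy (eigenvalues $\neq 1$), which feeds into $\mu$ and $p_g$ but is invisible to $H^1(A;\mathbb{C})$; one must check that it is distributed consistently between the two sides. In effect this reconstructs the relevant part of Wahl's own formula, from which the stated bound then drops out.
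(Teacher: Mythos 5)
First, a structural point: the paper does not prove this statement at all --- Theorem \ref{wahl} is imported verbatim from Wahl \cite[Cor. 2.9]{wahl}, and the sentence immediately preceding it explains that Wahl obtains the bound from his own formula for \(\mu-\tau\), which the paper explicitly describes as an \emph{alternative} to the Looijenga--Steenbrink formula (Theorem \ref{teolooi}). So your plan --- to re-derive the inequality from Theorem \ref{teolooi} by mixed Hodge theoretic bookkeeping on the vanishing cohomology --- is neither the paper's route (it has none) nor Wahl's; if it worked it would be a new proof, but as written it is a program rather than a proof.

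The two steps you yourself flag as ``the hard part'' are precisely the mathematical content of the theorem, and they are never carried out. (a) You never say what \(a_1,a_2,a_3\) and \(h^{0,0}(X,0)\) actually are; without their explicit definitions from \cite{looi} the claimed dictionary with the Hodge numbers \(h^{p,q}\) of \(H^2(F)\) cannot even be set up, let alone verified, and producing such a dictionary in which the total is at most \(2p_g-\dim H^1(A;\mathbb{C})\) amounts to re-proving Wahl's formula, not to a bookkeeping exercise. (b) The one concrete structural claim you do make --- that \(H^1(A;\mathbb{C})\) accounts for a subspace of type \((1,1)\) inside \(\operatorname{Gr}^W_2H^2(F)\) of dimension \(\dim H^1(A;\mathbb{C})\) --- is unjustified and appears to be false: \(H^1(A;\mathbb{C})\) carries weights \(0\) and \(1\) (the part coming from cycles of the dual resolution graph in weight \(0\), the part \(2\sum_i g(A_i)\) coming from the genera in weight \(1\)), and its known relation to the Milnor fiber, due to Durfee and Steenbrink, is that \(\dim H^1(A;\mathbb{C})=\mu_0\), the nullity of the intersection form on \(H^2(F)\); even after a Tate twist only the graph part would become type \((1,1)\), the genus part landing in weight \(3\). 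Finally, note what any proof must ultimately produce: combining Steenbrink's identity \(2p_g=\mu_0+\mu_+\) for smoothings of Gorenstein surface singularities with Durfee's \(\mu_0=\dim H^1(A;\mathbb{C})\), the asserted inequality is equivalent to \(\tau\geq\mu_-+\mu_0\), a lower bound for the dimension of the smoothing component (which for an ICIS equals \(\tau\)). That lower bound is exactly what Wahl proves by deformation theory and sheaf cohomology on the resolution, and nothing in your outline generates it.
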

Moreover, Example 4.6 of \cite{wahl} shows that this bound is sharp if one takes a generic positive weight deformation of 
\[z^2+x^{2a+1}+y^{2a+2}=0.\]
It would be certainly interesting to classify the surface singularities with maximal \(\mu-\tau\). This leads us to ask the following question
\begin{question}\label{quest:2}
	Which hypersurface singularities \((X,0)\subset(\mathbb{C}^3,0)\) satisfy \(\mu-\tau=2p_g-\dim H^1(A;\mathbb{C})?\)
\end{question}

\section{Dimca and Greuel problem for plane curve singularities}\label{sec:dgprob}

The first result about Question \ref{conjecture} is given for semi-quasi-homogeneous singularities in 2018 by Blanco and the author \cite{alblanc}. In April 2019 three different answers for irreducible plane curve singularities appeared in a short time. Alberich-Carrami\~{n}ana, Blanco, Melle-Hern\'{a}ndez and the author in \cite{taumin} give a positive answer to Question \ref{conjecture} for irreducible plane curve singularities through a formula for the minimal Tjurina number in an equisingularity class in terms of the sequence of multiplicities. A few days later, Genzmer and Hernandes in \cite{genzmertau} provide an alternative proof of Dimca and Greuel's inequality for the irreducible plane curve case. Despite the fact that both papers use quite different techniques, both are based on the explicit computations about the moduli space of an irreducible plane curve singularity given by Genzmer in \cite{genzmer16}. Finally, Wang in \cite{wang} gives another alternative proof for the irreducible case based also in Genzmer's result about the dimension of the generic component of the moduli space \cite{genzmer16}. Moreover, Wang's approach is of different nature since he proves that \(3\mu-4\tau\) satisfy a certain property (monotonicity under blow ups) which provides a nice perspective in the possible applications of Dimca and Greuel's Question \ref{conjecture} in the irreducible case.

However, none of the methods used in those proofs allow to answer the question: can the \(4/3\) bound be inferred from a deeper argument than just explicit computation of these invariants? Here, we are going to not only give a positive answer to Dimca and Greuel's Question \ref{conjecture} in its full generality but also a non computational explanation for the \(4/3\) bound.

To do so, let us consider the equation \(f\in\mathbb{C}\{x,y\}\) of a germ of isolated plane curve singularity in \((\mathbb{C}^2,\mathbf{0}).\) Now, we consider the germ of isolated hypersurface singularity  \((X,\mathbf{0})\subset(\mathbb{C}^3,\mathbf{0})\) defined by

\[F(x,y,z)=f(x,y)+z^2=0.\]
For such a singularity, the geometric genus has the following upper bound proved by Tomari:

\begin{theorem}\cite[Thm. A]{tomari} \label{thm:tomari} Let \((X,\mathbf{0})\subset(\mathbb{C}^3,\mathbf{0})\) be a germ of isolated hypersurface singularity defined by an equation \(F(x,y,z)=z^2+f(x,y)\) with \(f(x,y)\) of order at least two. Then \[8p_g+1\leq\mu.\]
\end{theorem}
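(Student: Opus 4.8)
The plan is to reduce the statement to a purely combinatorial inequality for the singularity spectrum of the plane curve germ $f$, and then to control the distribution of its spectral numbers.

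\emph{Step 1 (reduction to invariants of $f$).} First I would compute the Milnor number directly: the partial derivatives of $F$ are $2z,f_x,f_y$, so the Jacobian ring is $\mathbb{C}\{x,y,z\}/(2z,f_x,f_y)\cong\mathbb{C}\{x,y\}/(f_x,f_y)$ and hence $\mu=\mu(f)$. Next, by the Thom--Sebastiani formula the spectrum of $F$ is obtained from that of $f$ by the shift $\operatorname{Spec}(F)=\operatorname{Spec}(f)+\tfrac12$, where $\operatorname{Spec}(f)$ consists of $\mu$ rational numbers in $(0,2)$, symmetric about $1$. Finally, by Steenbrink's description of the geometric genus through the Hodge filtration on the Milnor fibre, $p_g$ equals the number of spectral numbers of $F$ that are $\le 1$; after the shift this reads $p_g=\#\{\alpha\in\operatorname{Spec}(f):\alpha\le\tfrac12\}$. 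I would sanity-check these normalizations on $A_1=\{z^2+x^2+y^2\}$ (where $p_g=0$) and on the suspension of $x^3+y^6$ (where $p_g=1$).

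\emph{Step 2 (reformulation via symmetry).} Writing $A=\#\{\alpha<\tfrac12\}$, $B=\#\{\alpha=\tfrac12\}$ and $C=\#\{\tfrac12<\alpha<\tfrac32\}$, the symmetry $\alpha\mapsto 2-\alpha$ gives $\mu=2A+2B+C$ and $p_g=A+B$. Thus the desired bound $8p_g+1\le\mu$ is equivalent to \[ C\ \ge\ 6\,p_g+1, \] i.e. the central band $(\tfrac12,\tfrac32)$ must contain at least $6p_g+1$ spectral numbers. For the ordinary multiple point $f=x^m+y^m$ the spectrum is the explicit lattice count $\{(i+j)/m:1\le i,j\le m-1\}$, for which one sees that $p_g/\mu\to 1/8$ and that equality $8p_g+1=\mu$ already holds at $m=4$; this both pins down the constant $8$ and shows that the additive $+1$ is sharp.

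\emph{Step 3 (the main obstacle).} The crux is to prove the band inequality $C\ge 6p_g+1$ \emph{uniformly} over all plane curve germs. My plan is to invoke Varchenko's semicontinuity of the spectrum: degenerating $f$ to a weighted-homogeneous (or Newton non-degenerate) germ of the same multiplicity bounds $\#\{\alpha\le\tfrac12\}$ from above, reducing the estimate to the lattice-point models of Step 2, where it can be verified by a direct count. The delicate points will be fixing the correct direction and the admissible length-one intervals in the semicontinuity statement, and handling the boundary contribution $\alpha=\tfrac12$ (the term $B$) that produces the extremal cases. An alternative and more geometric route, closer in spirit to the adjunction viewpoint of this paper, is to compute $p_g$ of the double cover $\{z^2=f\}$ as the number of adjunction conditions of coefficient $\tfrac12$ imposed by the branch curve on an embedded resolution, and to compare this count with the resolution formula for $\mu(f)$; again $x^m+y^m$ governs sharpness. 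In either approach the genuine difficulty is the uniform comparison between the number of ``very singular'' spectral numbers (equivalently, half-adjunction conditions) and the total Milnor number.
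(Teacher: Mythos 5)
Your target statement is not proved in the paper at all: it is quoted verbatim from Tomari (\cite[Thm. A]{tomari}), so there is no in-paper argument to compare against and your proposal has to stand on its own. Steps 1 and 2 of your plan are correct and cleanly normalized: \(\mu(F)=\mu(f)\) from the Jacobian ring, the Thom--Sebastiani shift \(\operatorname{Spec}(F)=\operatorname{Spec}(f)+\tfrac12\), Steenbrink's identification \(p_g=\#\{\alpha\in\operatorname{Spec}(f):\alpha\le\tfrac12\}\) (your sanity checks on \(A_1\) and on \(z^2+x^3+y^6\) both work), and the symmetry reformulation \(8p_g+1\le\mu\iff C\ge 6p_g+1\). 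Your sharpness observation is also right, and in fact stronger than you state: equality \(8p_g+1=\mu\) holds for every even ordinary multiple point \(x^{2k}+y^{2k}\) (\(p_g=k(k-1)/2\), \(\mu=(2k-1)^2\)), not only at \(m=4\).

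The genuine gap is Step 3, and it is not merely an unfinished detail: semicontinuity of the spectrum is structurally the wrong tool here. The Varchenko--Steenbrink theorem says that if \(f\) degenerates to \(f_0\), then the number of spectral numbers of \(f\) in \emph{every} half-open unit interval is at most the corresponding number for \(f_0\); in particular it bounds from above, simultaneously, \(p_g(f)\), \(C(f)\), and \(\mu(f)\). Since both sides of \(8p_g+1\le\mu\) move in the same direction under specialization, knowing the inequality for a lattice model \(f_0\) transfers nothing to \(f\), and running the deformation the other way fails for the same reason: you can never get the needed \emph{lower} bound on \(C\) (equivalently on \(\mu\)) relative to \(p_g\). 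The sharpness pattern makes this fatal rather than repairable: equality holds exactly at the \emph{most generic} germs of even multiplicity, so the theorem is equivalent to saying that along every degeneration of the ordinary \(2k\)-fold point, \(\mu\) jumps at least eight times as much as \(p_g\) --- a statement about relative jump sizes, about which semicontinuity is silent. Your second, ``adjunction/resolution'' route is the one that can actually be completed, and it is essentially Tomari's own method: the canonical (even) resolution of the double cover gives \(p_g=\sum_p\binom{\lfloor m_p/2\rfloor}{2}\), summed over the infinitely near points of the parity-adjusted branch divisor, to be compared with \(\mu=\sum_p m_p(m_p-1)-r+1\); at points where no odd exceptional multiplicities intervene this is a trivial per-point check (the slack per point is \(m_p\) for \(m_p\) even and \(3(m_p-1)\) for \(m_p\) odd, which dominates the branch count \(r\le m_0\)), but the bookkeeping of exceptional curves entering the branch locus with odd multiplicity is precisely where the real work lies, and your proposal leaves that entirely to the reader.
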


Now we are ready to provide a full answer to Dimca and Greuel Question \ref{conjecture}.

\begin{theorem}\label{teorema4/3}
	For any germ of a plane curve singularity 
	\[\frac{\mu}{\tau}<\frac{4}{3}.\]
\end{theorem}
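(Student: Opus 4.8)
The plan is to transfer the question from the plane curve $(C,\mathbf{0})=\{f=0\}$ to the normal two-dimensional double point $(X,\mathbf{0})\subset(\mathbb{C}^3,\mathbf{0})$ defined by $F(x,y,z)=z^2+f(x,y)$, and to exploit the fact that both the Milnor and the Tjurina numbers are unchanged by this suspension. We may assume $C$ is genuinely singular, so $f$ has order at least two and $\mu,\tau>0$; then $X$ is a hypersurface with an isolated singularity, hence Cohen--Macaulay and regular in codimension one, so $X$ is a normal surface singularity and both Tomari's Theorem \ref{thm:tomari} and Wahl's Theorem \ref{wahl} apply to it.

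First I would establish the two invariance identities. For the Milnor number, the Sebastiani--Thom formula gives $\mu(F)=\mu(f)\cdot\mu(z^2)=\mu(f)$, since $\mu(z^2)=\dim_{\mathbb{C}}\mathbb{C}\{z\}/(z)=1$. For the Tjurina number a direct computation suffices: the Jacobian ideal of $F$ is $(F,F_x,F_y,F_z)=(z^2+f,\,f_x,\,f_y,\,2z)$, and since $2z$ lies in the ideal we may set $z=0$, whence
\[
\tau(X)=\dim_{\mathbb{C}}\frac{\mathbb{C}\{x,y,z\}}{(z^2+f,f_x,f_y,2z)}=\dim_{\mathbb{C}}\frac{\mathbb{C}\{x,y\}}{(f,f_x,f_y)}=\tau(C).
\]
In particular $\mu(C)-\tau(C)=\mu(X)-\tau(X)$, so the difference we wish to bound is actually an invariant of the surface $X$.

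Next I would chain the two external bounds for the double point $X$. Wahl's Theorem \ref{wahl} (discarding the nonnegative term $\dim H^1(A;\mathbb{C})$) gives $\mu(X)-\tau(X)\le 2p_g$, while Tomari's Theorem \ref{thm:tomari} gives $8p_g+1\le\mu(X)$, that is $p_g\le(\mu(X)-1)/8$. Combining these,
\[
\mu(C)-\tau(C)=\mu(X)-\tau(X)\le 2p_g\le\frac{\mu(X)-1}{4}<\frac{\mu(X)}{4}=\frac{\mu(C)}{4}.
\]
Rearranging $\mu(C)-\tau(C)<\mu(C)/4$ yields $\tau(C)>\tfrac34\mu(C)$, that is $\mu(C)/\tau(C)<4/3$, as claimed.

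The computations here are all routine; the substance of the argument lies entirely in the choice to pass to the suspension $z^2+f$ and in the two deep geometric inputs. The conceptual origin of the $4/3$ bound is that it is forced by the product of Wahl's factor $2$ and the reciprocal $1/8$ of Tomari's constant, the latter encoding precisely the restriction on the number of adjunction conditions imposed by the double point structure. The main point requiring care is the verification that $\mu$ and $\tau$ are genuinely preserved by the suspension---the Milnor case resting on Sebastiani--Thom and the Tjurina case on the elementary elimination of $z$ above---since the whole reduction to a surface problem depends on these two identities.
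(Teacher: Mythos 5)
Your proposal is correct and follows essentially the same route as the paper: pass to the double point $z^2+f(x,y)$, observe that $\mu$ and $\tau$ are preserved (the paper does the same Tjurina-ideal computation and uses the Milnor invariance implicitly), and chain Wahl's bound $\mu-\tau\le 2p_g$ with Tomari's bound $8p_g+1\le\mu$ to get $\mu-\tau<\mu/4$. Your write-up is in fact slightly more careful than the paper's, making explicit the Sebastiani--Thom step, the hypothesis that $f$ has order at least two, and the need for $\tau>0$ in the final rearrangement.
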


\begin{proof}
	
	Let \(\xi:f(x,y)=0\) be a germ of isolated plane curve singularity. Let us consider the germ of double point \((X,0)\) defined by the equation \[F(x,y,z)=f(x,y)+z^2=0.\] 
	It is then trivial to check that the Tjurina ideal of \((X,0)\) can be expressed as  \[\Big(f,\frac{\partial f}{\partial x},\frac{\partial f}{\partial y},z\Big)\subset\mathbb{C}\{x,y,z\}.\] Then it is obvious that the Tjurina number of the double point \(\tau(X)\) is equal to the Tjurina number of the germ of plane curve defined by \(f(x,y)=0\).
	
	Let \(p_g\) be the geometric genus of the double point \(X\). From Tomari's Theorem \ref{thm:tomari} we know that \(p_g<\mu/8.\) Combining this with Wahl's Theorem \ref{wahl} gives 
	\[\mu(\xi)-\tau(\xi)=\mu(X)-\tau(X)\leq2p_g<\mu/4\Rightarrow\frac{\mu}{\tau}<\frac{4}{3}.\]
\end{proof}

In this way, we can conclude that the bound \(4/3\) for the quotient \(\mu/\tau\) of plane curve singularities is inferred from the rich properties of the geometric genus of the corresponding normal two-dimensional double point singularity. More concretely, recall that Merle and Teissier \cite[Section 1]{merleteissier} showed that the geometric genus is the number of adjunction conditions imposed by the singularity. Therefore, we can conclude that the bound \(4/3\) is due to the restrictions for the adjunction conditions of a normal two-dimensional double point singularity.

\subsection{Curves with the semigroup of a plane branch}\label{sec:monomialcurve}

Let us consider a numerical semigroup \(\Gamma,\) i.e. an additive submonoid of the natural numbers \((\Gamma,+)\subset (\mathbb{N},+)\) with finite complement \(|\mathbb{N}\setminus\Gamma|<\infty.\) Assume \(\Gamma\) is minimally generated by \(\{\overline{\beta}_0, \overline{\beta}_1, \dots, \overline{\beta}_g\}\) with \(\gcd(\overline{\beta}_0, \overline{\beta}_1, \dots, \overline{\beta}_g)=1\). Thus, \[ \Gamma = \langle \overline{\beta}_0, \overline{\beta}_1, \dots, \overline{\beta}_g \rangle=\{z\in\mathbb{N}|\;z=l_0\overline{\beta}_0+l_1\overline{\beta}_1+ \cdots+ l_g\overline{\beta}_g\;\text{and}\;l_i\in\mathbb{N}\;\text{for}\;i=0,\dots,g\}.  \]
Assume that \(\Gamma\) satisfies the following conditions:
\begin{enumerate}
	\item \(n_i\overline{\beta}_i\in\langle \overline{\beta}_0, \overline{\beta}_1, \dots, \overline{\beta}_{i-1} \rangle\),
	\item \(n_i\overline{\beta}_i<\overline{\beta}_{i+1}\) for all \(i=1,\dots,g,\)
\end{enumerate}
where \(n_{i}:=\gcd(\overline{\beta}_0, \overline{\beta}_1, \dots, \overline{\beta}_{i-1})/\gcd(\overline{\beta}_0, \overline{\beta}_1, \dots, \overline{\beta}_i)\).
A semigroup satisfying those conditions is called semigroup of a plane branch since given such a semigroup there always exists a plane branch with such a semigroup (See \cite[Chap I. 3.2]{teissier-appendix}). 

Let \(t\in\mathbb{C}\) be a local coordinate of the germ \((\mathbb{C},0)\) and let \((u_0,\dots,u_g)\in \mathbb{C}^{g+1}\) be local coordinates of the germ \((\mathbb{C}^{g+1}, \boldsymbol{0})\). Following Teissier \cite[Chap I. Sec. 1]{teissier-appendix}, let \( (C^\Gamma, \boldsymbol{0}) \subset (\mathbb{C}^{g+1}, \boldsymbol{0}) \) be the curve defined via the parameterization
\[C^{\Gamma}:\; u_i=t^{\overline{\beta}_i}\;\text{for}\;0\leq i\leq g.\]

The germ \( (C^\Gamma, \boldsymbol{0}) \) is irreducible since \( \gcd(\overline{\beta}_0, \dots, \overline{\beta}_g) = 1 \). Moreover, the monomial curve \( (C^\Gamma, \boldsymbol{0}) \) is a quasi-homogeneous complete intersection, see \cite[I.2]{teissier-appendix}. The monomial curve has the following important property:

\begin{theorem}[{\cite[Chap. I Theorem 1 (1.3)]{teissier-appendix}}]\label{thm:genericfiber}
	Every branch \( (C, \boldsymbol{0}) \) with semigroup \( \Gamma \) is isomorphic to the generic fiber of a one parameter complex analytic deformation of \( (C^\Gamma, \boldsymbol{0}) \).
\end{theorem}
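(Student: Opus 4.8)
The plan is to realize the given branch $(C,\boldsymbol{0})$ inside $(\mathbb{C}^{g+1},\boldsymbol{0})$ through the generators of its semigroup, and then to degenerate it onto the monomial curve $(C^{\Gamma},\boldsymbol{0})$ by means of the weighted $\mathbb{C}^{*}$-action under which $C^{\Gamma}$ is invariant. The generic member of this degeneration will be isomorphic to $C$, while the special member will be $C^{\Gamma}$; this is exactly the desired one-parameter deformation.

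First I would produce a normalized parameterization. Let $\nu:(\mathbb{C},0)\to(C,\boldsymbol{0})$ be the normalization, so that $\mathcal{O}_{C}\subset\mathbb{C}\{t\}$ has value semigroup $\Gamma$. I would pick $x_{0},\dots,x_{g}\in\mathcal{O}_{C}$ with $\operatorname{ord}_{t}(x_{i})=\overline{\beta}_{i}$ and, after rescaling $t$ and the $x_{i}$, with leading coefficient $1$. Since the orders $\overline{\beta}_{0},\dots,\overline{\beta}_{g}$ generate $\Gamma$, a standard order-by-order elimination shows that $x_{0},\dots,x_{g}$ generate $\mathcal{O}_{C}$ as a $\mathbb{C}$-algebra: given $h\in\mathcal{O}_{C}$ one subtracts a suitable monomial in the $x_{i}$ of the same order $\operatorname{ord}(h)\in\Gamma$, strictly raising the order, and concludes by completeness. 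This yields an embedding $C\hookrightarrow(\mathbb{C}^{g+1},\boldsymbol{0})$ with parameterization
\[
u_{i}(t)=t^{\overline{\beta}_{i}}+\sum_{k>\overline{\beta}_{i}}a_{ik}t^{k},\qquad i=0,\dots,g.
\]

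Next I would introduce the degeneration. Assigning to $u_{i}$ the weight $\overline{\beta}_{i}$ and to $t$ the weight $1$ makes $C^{\Gamma}$ weighted homogeneous, and I set
\[
U_{i}(t,s):=s^{-\overline{\beta}_{i}}\,u_{i}(st)=t^{\overline{\beta}_{i}}+\sum_{k>\overline{\beta}_{i}}a_{ik}\,s^{\,k-\overline{\beta}_{i}}\,t^{k}.
\]
For $s\neq 0$ the curve $\{(U_{0}(t,s),\dots,U_{g}(t,s))\}$ differs from $C$ only by the reparameterization $t\mapsto st$ and the linear change $u_{i}\mapsto s^{-\overline{\beta}_{i}}u_{i}$, hence is isomorphic to $C$; since every exponent satisfies $k-\overline{\beta}_{i}>0$, letting $s\to 0$ kills all higher-order terms and recovers $u_{i}=t^{\overline{\beta}_{i}}$, i.e.\ $C^{\Gamma}$. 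Thus the total space $\mathcal{C}=\bigcup_{s}C_{s}\times\{s\}\subset\mathbb{C}^{g+1}\times\mathbb{C}_{s}$ is a one-parameter family over $(\mathbb{C},0)$ whose special fiber is $C^{\Gamma}$ and whose generic fiber is isomorphic to $C$.

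It remains to check that $\mathcal{C}\to(\mathbb{C},0)$ is a genuine flat analytic deformation of $C^{\Gamma}$. Flatness over the regular one-dimensional base follows because $s$ is a non-zero-divisor on $\mathcal{O}_{\mathcal{C}}$ (no component of the irreducible total space is contained in a fiber), so $\mathcal{O}_{\mathcal{C}}$ is torsion-free, hence flat, over $\mathbb{C}\{s\}$. I expect the main obstacle to be verifying that the flat limit is precisely the \emph{reduced} monomial curve $C^{\Gamma}$ and not a thickening of it; this is where the hypotheses on $\Gamma$ enter. Concretely, one uses that $C^{\Gamma}$ is a quasi-homogeneous complete intersection cut out by $g$ weighted-homogeneous binomials arising from the relations $n_{i}\overline{\beta}_{i}\in\langle\overline{\beta}_{0},\dots,\overline{\beta}_{i-1}\rangle$ of condition (1), together with the inequalities $n_{i}\overline{\beta}_{i}<\overline{\beta}_{i+1}$ of condition (2); the branch $C$ is then defined by adding to each of these binomials only terms of strictly higher weight, so $\mathcal{C}$ is a complete intersection with constant fiber dimension. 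Flatness and the identification of the special fiber with $C^{\Gamma}$ then follow simultaneously, which completes the argument.
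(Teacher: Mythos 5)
The paper does not prove this statement: it is quoted, with citation, from Teissier's appendix \cite{teissier-appendix} (Chap.~I, Theorem~1 (1.3)), so there is no in-paper argument to compare against. Your proposal is in substance a faithful reconstruction of Teissier's original proof: embed the branch in $(\mathbb{C}^{g+1},\boldsymbol{0})$ via elements of orders $\overline{\beta}_0,\dots,\overline{\beta}_g$, then degenerate onto $C^{\Gamma}$ by the weighted substitution $u_i(t)\mapsto s^{-\overline{\beta}_i}u_i(st)$, which is exactly the one-parameter family Teissier writes down, with flatness extracted from the quasi-homogeneous complete intersection structure of $C^{\Gamma}$. Two steps are glossed rather than proved: first, the generation of $\mathcal{O}_C$ by $x_0,\dots,x_g$ cannot be concluded ``by completeness,'' since the analytic local ring $\mathcal{O}_C$ is not complete --- one needs that $\mathcal{O}_C$ is a finite module over the subalgebra $A$ they generate (e.g.\ by Weierstrass division against $x_0=t^{\overline{\beta}_0}+\cdots$) together with $\widehat{A}=\widehat{\mathcal{O}}_C$ to force $A=\mathcal{O}_C$; second, the claim that the equations of $C$ are the $g$ defining binomials of $C^{\Gamma}$ plus terms of strictly higher weight --- the crux of showing the flat limit is the \emph{reduced} monomial curve --- is asserted, though it does follow from the same order-by-order rewriting applied to $f_j(x_0,\dots,x_g)$, whose leading terms cancel. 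These are omissions of detail inside the correct, and indeed standard, strategy, not errors of approach.
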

\begin{rem}
	As remarked by Teissier \cite{teissier-appendix}, the above statement is a short-hand way of stating the following: For every branch \( (C, \boldsymbol{0}) \) with semigroup \( \Gamma \) there exists a deformation \(\rho: (X, \boldsymbol{0}) \longrightarrow (D, \boldsymbol{0})\) of \(C^{\Gamma}\), with a section \(\sigma\), such that for any sufficiently small representative \(\widetilde{\rho}\) of the germ of \(\rho,\) \((\widetilde{\rho}^{-1}(v),\sigma(v))\) is analytically isomorphic to \( (C, \boldsymbol{0}) \) for all \(v\neq 0\) in the image of \(\widetilde{\rho}\).
\end{rem}
\begin{rem}
	We refer to \cite{greuel-survey} for a survey about deformation theory.
\end{rem}
Let us denote by  \( G : (X, \boldsymbol{0}) \longrightarrow (S, \boldsymbol{0}) \) the miniversal deformation of \( C^\Gamma \). Let us denote by \( (C_{\boldsymbol{v}}, \boldsymbol{0}), \boldsymbol{v} \in S \) any fiber of the miniversal deformation of \( (C^\Gamma, \boldsymbol{0}) \). We will denote by \( \tau(C_{\boldsymbol{v}}) \) the dimension of the base space of the miniversal deformation of the fiber \( (C_{\boldsymbol{v}}, \boldsymbol{0}) \).  Following Teissier \cite[Chap. I, Sec. 2]{teissier-appendix}, there exists a germ of a nonsingular subspace \( (D_\Gamma,\mathbf{0})\subset(S,\boldsymbol{0}) \) such that the deformation obtained by restricting \(G\) to \(D_\Gamma\) is miniversal for the deformations of \(C^{\Gamma}\) with reduced base each of whose fibers is irreducible with semigroup \(\Gamma\) (see \cite[Chap. I, Theorem 3 (2.10)]{teissier-appendix}). Thus, \(D_\Gamma\) is called the base space of the miniversal constant semigroup deformation of \( (C^\Gamma, \boldsymbol{0}). \) Moreover, according to \cite[Chap. I, Theorem 3 (2.10)]{teissier-appendix}, if we denote the restriction of \(G\) to \(D_\Gamma\)  as \(G_\Gamma\), then there exists a section \(\sigma_\Gamma\) of \(G_\Gamma\) that picks out the unique singular point of each fiber.

The main history behind the miniversal constant semigroup deformation of \(C^{\Gamma}\) is the construction of the moduli space of branches with semigroup \(\Gamma\). Following Teissier \cite[Chap. II, Sec. 2]{teissier-appendix}, analytic equivalence of germs induces an equivalence relation \(\sim\) on \(D_\Gamma\) as follows: \(w\sim w'\) if and only if  the germs \((G_{\Gamma}^{-1}(w),\sigma_\Gamma(w))\) and \((G_{\Gamma}^{-1}(w'),\sigma_\Gamma(w'))\) are analytically isomorphic. Thus, Teissier calls \(\widetilde{M}_{\Gamma}:=D_\Gamma/\sim\) the moduli space associated to the semigroup \(\Gamma\). Moreover, \(\widetilde{M}_{\Gamma}\) is quasi-compact and connected \cite[Chap. II, Theorem 5 (2.3)]{teissier-appendix}.

One can easily check that there exist curves that are not plane in the miniversal deformation of \( (C^\Gamma, \boldsymbol{0}) \), even more  there are curves which are not plane in the miniversal constant semigroup deformation of the monomial curve. Following \cite[Chap. II 3.2]{teissier-appendix}, consider \(V_{\min}\subset D_\Gamma\) the set of points such that if \(v\in V_{\min}\) then \(\tau(C_v)=\tau_{\min}\) assumes the minimal value between all possible values of \(\tau(C_v)\) with \(v\in D_\Gamma\). The set \(V_{\min}\) is an open analytic set by \cite[Addendum, 2.5]{teissier-appendix}. Moreover, by Peraire \cite[Theorem 7.2]{peraire} together with Theorem \ref{thm:genericfiber} there exist \(v\in V_{\min}\) such that the germ \((G_{\Gamma}^{-1}(\boldsymbol{v}), \sigma_\Gamma(v))\) is an irreducible plane curve singularity with \(\tau=\tau_{\min}\). From this, we have the following corollary of Theorem \ref{teorema4/3} which gives a partial answer to Problem \ref{problem} in the case \(r=N-1\) with arbitrary \(N\). 
\begin{corollary}\label{cor:mu4space}
	Let \((C,\mathbf{0})\subset(\mathbb{C}^N,\mathbf{0})\) be an irreducible germ of curve with isolated singularity (not necessarily plane) with semigroup  \( \Gamma = \langle \overline{\beta}_0, \overline{\beta}_1, \dots, \overline{\beta}_g \rangle \) of an irreducible plane curve singularity, i.e. with semigroup satisfying conditions (1) and (2). Then, 
	
	\[\mu-\tau<\frac{\mu}{4}.\]
\end{corollary}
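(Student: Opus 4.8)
The plan is to deduce the statement from Theorem \ref{teorema4/3} together with the structure of the miniversal constant semigroup deformation $D_\Gamma$ discussed above. The three ingredients I would combine are: first, that the Milnor number of an irreducible curve germ is an invariant of its semigroup, so it stays constant along every fiber of $G_\Gamma$; second, that the minimal Tjurina value $\tau_{\min}$ over $D_\Gamma$ is attained at a \emph{plane} branch, which is exactly the content of the discussion preceding the statement (Peraire's Theorem together with Theorem \ref{thm:genericfiber}); and third, the bound $\mu/\tau<4/3$ for that plane branch furnished by Theorem \ref{teorema4/3}. The point is that the entire estimate can be transported from a single plane branch to every curve germ sharing the semigroup $\Gamma$.

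First I would record that $(C,\mathbf 0)$, being an irreducible curve germ with semigroup $\Gamma$, is isomorphic to one of the fibers $(C_{\boldsymbol v},\mathbf 0)$ of $G_\Gamma$ for some $\boldsymbol v\in D_\Gamma$; this is the defining property of the miniversal constant semigroup deformation, whose fibers realize up to analytic isomorphism every branch with semigroup $\Gamma$ (equivalently, $C$ determines a point of the moduli space $\widetilde M_\Gamma=D_\Gamma/\!\sim$). In particular $(C,\mathbf 0)$ is then a complete intersection, so $\mu$ and $\tau$ are the ICIS invariants of the Introduction, and by the very definition of $\tau_{\min}$ one has $\tau(C)=\tau(C_{\boldsymbol v})\geq\tau_{\min}$. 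Next I would invoke that the Milnor number of an irreducible curve germ equals $2\delta$, where $\delta$ is the delta invariant, and that for a branch $\delta$ equals the number of gaps $\#(\mathbb N\setminus\Gamma)$ of its semigroup; hence $\mu$ depends only on $\Gamma$ and takes one common value $\mu$ on every fiber over $D_\Gamma$. Choosing $\boldsymbol v_0\in V_{\min}$ with $(G_\Gamma^{-1}(\boldsymbol v_0),\sigma_\Gamma(\boldsymbol v_0))$ an irreducible \emph{plane} branch satisfying $\tau=\tau_{\min}$, Theorem \ref{teorema4/3} applied to this plane branch gives $\mu-\tau_{\min}<\mu/4$, and combining the two inequalities yields
\[
\mu-\tau(C)\;\leq\;\mu-\tau_{\min}\;<\;\frac{\mu}{4},
\]
which is the assertion.

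The two points requiring genuine care—rather than a deep obstacle—are precisely the two invariance statements that make the transport legitimate: that $C$ really occurs as a fiber over $D_\Gamma$, so the inequality $\tau(C)\geq\tau_{\min}$ is justified, and that $\mu$ is constant across $D_\Gamma$. The first rests on Teissier's construction of $D_\Gamma$ as a base for all constant semigroup deformations of $C^\Gamma$; the second rests on the semigroup formula $\mu=2\delta=2\,\#(\mathbb N\setminus\Gamma)$ for irreducible curve germs, valid for non-plane complete intersection branches as well via $\mu=2\delta-r+1$ with $r=1$. Once both are in place the conclusion is immediate, since all the quantitative strength is supplied by Theorem \ref{teorema4/3} at the distinguished plane branch attaining $\tau_{\min}$.
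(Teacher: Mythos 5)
Your proposal is correct and essentially reproduces the paper's own proof: both realize \((C,\mathbf{0})\) as a fiber of \(G_\Gamma\) over \(D_\Gamma\) (Theorem \ref{thm:genericfiber}), use \(\mu=2\delta=2|\mathbb{N}\setminus\Gamma|\) to conclude that \(\mu\) is constant over \(D_\Gamma\), and then transport the bound \(\mu-\tau_{\min}<\mu/4\) of Theorem \ref{teorema4/3} from the plane branch attaining \(\tau_{\min}\) (Peraire's theorem) via \(\tau(C)\geq\tau_{\min}\). The only negligible difference is that you justify the constancy of \(\mu\) directly from the identity \(\delta=|\mathbb{N}\setminus\Gamma|\) for branches, whereas the paper quotes Teissier's \(\delta\)-constancy of the fibers of the miniversal constant semigroup deformation.
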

\begin{proof}
	Since \((C,\mathbf{0})\subset(\mathbb{C}^N,\mathbf{0})\) is an irreducible germ of curve with semigroup  \( \Gamma = \langle \overline{\beta}_0, \overline{\beta}_1, \dots, \overline{\beta}_g \rangle \) of an irreducible plane curve singularity then \((C,\mathbf{0})\) is analytically isomorphic to the generic fiber of a one parameter complex analytic deformation of \( (C^\Gamma, \boldsymbol{0}) \) by Theorem \ref{thm:genericfiber}. Let \(v\in D_\Gamma\) be such that \((C,\mathbf{0})\) is analytically isomorphic to \((G_\Gamma^{-1}(v),\sigma_\Gamma(v))\). 
	
	By \cite[Chap.I, 2.11.2]{teissier-appendix} the fibers of the miniversal constant semigroup deformation of the monomial curve \(C^{\Gamma}\) are also \(\delta(\Gamma)=|\mathbb{N}\setminus\Gamma|\)--constant. Thus, \(\delta(C)=|\mathbb{N}\setminus\Gamma|.\) Since \(C\) is an irreducible germ of curve singularity, by \cite[Proposition 1.2.1]{buchgreu} \(\mu=2\delta\). This means that 
	
	\[\mu-\tau\leq \mu-\tau_{\min}<\frac{\mu}{4},\]
	where the last inequality is coming from Theorem \ref{teorema4/3} and the fact that there exists an irreducible plane curve singularity with semigroup \(\Gamma,\) \(\tau=\tau_{\min}\) \cite{peraire} and \(\mu=2\delta\).
\end{proof}

\section{Durfee conjecture and the quotient \(\mu/\tau\) for surface singularities}\label{sec:durfee}
%On this section we are going to show that the bound \(4/3\) is no longer true for general surface singularities. However, we can use the general framework of the solution of Dimca and Greuel Question \ref{conjecture} to link Problem \ref{problem} with Durfee conjecture \ref{durfeeconj}.
Following the ideas of the solution of Dimca and Greuel's Question \ref{conjecture}, we are going to continue by exploring how far the general strategy of finding optimal upper bounds for the geometric genus is useful for providing solutions to Problem \ref{problem}. In this direction, Durfee's conjecture \ref{durfeeconj} is key for our purpose.

Durfee's conjecture \ref{durfeeconj} was stated by Durfee in \cite{durfee} as a somehow natural question regarding the intersection form of the Milnor fiber. In this spirit, Durfee's conjecture has been object of an extensive study originating a strong and prolific research area. Before continuing, let's briefly sketch the state of the art of  Durfee's conjecture \ref{durfeeconj}. In the early 90s, some special cases were proven by different mathematicians: for \((X,0)\) of multiplicity 2 Tomari's Theorem \ref{thm:tomari} proves a stronger inequality \(8p_g<\mu\), for multiplicity 3 Ashikaga \cite{ashi} proves the inequality \(6p_g\leq\mu-2\), for quasi-homogeneous singularities Xu and Yau \cite{xu} prove the inequality \(6p_g\leq\mu-\operatorname{mult}(X,0)+1\). At the end of the 90s, the inequality \(6p_g\leq \mu\) is proven for the following families of surface singularities: Némethi \cite{nemethi1},\cite{nemethi2} for suspension type singularities \(\{g(x,y)+z^k=0\}\) and Melle-Hernández \cite{melle} for absolutely isolated singularities. In 2017, Kerner and Némethi \cite{kerner3} showed that Durfee's conjecture is true for Newton non-degenerate singularities with large enough Newton boundary. Recently, K\'{o}llar and Némethi prove in \cite{kollar} that Durfee's conjecture \ref{durfeeconj} is true if the link of the isolated hypersurface singularity is an integral homology sphere. Moreover, in a recent preprint \cite{eno} Enokizono show that Durfee's conjecture \ref{durfeeconj} is true whenever the topological Euler characteristic of the exceptional divisor of the minimal resolution is positive.

However, for the isolated complete intersection non-hypersurface case Kerner and N\'{e}methi show in 2012 that the inequality \(6p_g\leq\mu\) is no longer true \cite{kerner1}. They propose and they study a more general refined conjecture in a series of papers \cite{kerner1, kerner2,kerner3}:

\begin{conj}[Kerner--N\'{e}methi] \cite{kerner1, kerner2,kerner3}\label{conjeturakerner}
	Let \((X,0)\subset(\mathbb{C}^N,0)\) be an isolated complete intersection singularity of dimension \(n\) and codimension \(r=N-n.\) Then, 
	\begin{enumerate}
		\item for \(n=2\) and \(r=1\) one has \(\mu\geq 6p_g.\)
		\item for \(n=2\) and arbitrary \(r\) one has \(\mu> 4p_g.\)
		\item for \(n\geq 3\) and fixed \(r\) one has \(\mu\geq C_{n,r}p_g\) where \(C_{n,r}\) is defined by \[C_{n,r}:=\frac{\bigg(\begin{array}{c}
				n+r-1\\
				n
			\end{array}\bigg)(n+r)!}{\bigg\{\begin{array}{c}
				n+r\\
				r
			\end{array}\bigg\}r!}.\]
	\end{enumerate}
\end{conj}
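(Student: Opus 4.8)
The plan is to read the three inequalities as a single family of Durfee-type bounds — part (1) is literally Durfee's Conjecture \ref{durfeeconj} — and to attack them all through the \emph{singularity spectrum}, which for an ICIS of dimension $n$ is a symmetric multiset of $\mu$ rational numbers in $(0,n)$. Under this dictionary the geometric genus counts the spectral numbers in the initial segment $(0,1]$, while $\mu$ is the total count, so each inequality $\mu\ge C_{n,r}\,p_g$ becomes the assertion that the spectrum cannot concentrate too heavily near the bottom of its range. The model to keep in mind is Tomari's Theorem \ref{thm:tomari}: for a double point the spectrum is forced into a configuration spread widely enough that $8p_g+1\le\mu$, and I would aim to produce, for each pair $(n,r)$, the analogous extremal spread that yields $C_{n,r}$.

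First I would reduce to the Newton-nondegenerate case. Since $p_g$ is upper semicontinuous and $\mu$ is constant along a $\mu$-constant deformation, degenerating $(X,0)$ to a nondegenerate complete intersection with the same Newton polyhedra preserves $\mu$ and can only raise $p_g$; it therefore suffices to prove $\mu\ge C_{n,r}\,p_g$ for the nondegenerate representative. There both invariants are combinatorial: $\mu$ is given by a Kouchnirenko-type alternating sum of volumes of the Newton polyhedra, and the spectrum — hence $p_g$ — is read off from the lattice points under the Newton boundary. The inequality then becomes a purely convex-geometric comparison of two Ehrhart-type quantities attached to the same polyhedra.

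The constant $C_{n,r}$ should emerge as the extremal ratio $\mu/p_g$ of the \emph{homogeneous} complete intersection, i.e. the one whose Newton polyhedra are dilates of the standard simplex; the factor $\binom{n+r-1}{n}(n+r)!$ in the numerator and the Stirling-type factor in the denominator of $C_{n,r}$ are exactly the monomial counts produced by this simplex, just as the $4/3$ of Theorem \ref{teorema4/3} arose from the rigidity of the double-point configuration. (One checks for $n=2$, $r=1$ that the Brieskorn family $x^d+y^d+z^d$ gives $\mu/p_g\to 6=C_{2,1}$, confirming asymptotic sharpness.) The heart of the matter is then to show that, among all admissible polyhedra, the simplex minimizes $\mu/p_g$; for $n=2$ one can cross-check the output against Theorem \ref{teolooi}, writing $\mu-\tau=\sum_{p=0}^{n-2}h^{p,0}+a_1+a_2+a_3$, and against Wahl's bound $\mu-\tau\le 2p_g-\dim H^1(A;\mathbb{C})$ of Theorem \ref{wahl}, which keeps the resolution invariants $a_1,a_2,a_3$ under control.

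The main obstacle, in all three parts, is the passage from the combinatorial model back to an extremal estimate for \emph{small} polyhedra. Kerner and N\'emethi's results cover only the regime of large Newton boundary, where the leading Ehrhart term dominates and the simplex bound is easy; the genuine difficulty is the bounded range, where lower-order lattice-point counts can distort $\mu/p_g$ away from the asymptotic extremal value. For codimension $r>1$ there is the further complication that neither the spectrum nor $p_g$ is controlled by the Newton data outside the nondegenerate locus, so the semicontinuity reduction must be supplemented by a direct handling of the invariants $a_1,a_2,a_3$ of Theorem \ref{teolooi} — proving that the extremal configurations are precisely the toric ones is where I expect the real work to lie.
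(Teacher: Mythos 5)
You are attempting to prove Conjecture \ref{conjeturakerner}, which the paper does not prove and does not claim to prove: it is an open conjecture of Kerner and N\'{e}methi, quoted (with citations) only to motivate Problem \ref{problem} and Conjecture \ref{conjetura3/2}, and its first part is exactly Durfee's Conjecture \ref{durfeeconj}, itself still open for general hypersurfaces in \((\mathbb{C}^3,0)\). So there is no proof in the paper to compare yours against, and your text has to stand on its own as a solution of an open problem. It does not, for two concrete reasons. First, your opening reduction is invalid: an arbitrary ICIS does not admit a \(\mu\)-constant degeneration (nor deformation) to a Newton-nondegenerate complete intersection with the same invariants. Newton-nondegenerate germs form a very restricted class, the Newton polyhedron of a degenerate germ is coordinate-dependent and carries no control over \(\mu\), the spectrum, or \(p_g\), and the \(\mu\)-constant stratum of a general germ simply contains no nondegenerate member. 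Without that reduction, the Kouchnirenko/Ehrhart translation on which the rest of your argument rests is unavailable. (The spectral dictionary you invoke is correct as far as it goes, but by itself the symmetry of the spectrum only yields the trivial bound \(\mu\geq 2p_g\); the constants \(6\), \(4\) and \(C_{n,r}\) are precisely what it cannot see.)

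Second, even inside the nondegenerate class you have not proved the inequality: the assertion that the standard simplex minimizes \(\mu/p_g\) among all admissible Newton polyhedra is the whole content of the conjecture in that case, and you explicitly defer it (``where I expect the real work to lie''). As the paper records, Kerner and N\'{e}methi themselves \cite{kerner3} established this only for Newton boundaries that are large enough, where the leading Ehrhart terms dominate; the bounded regime, and the case \(r>1\) where even the combinatorial formulas for \(\mu\) and \(p_g\) become problematic, remain open. Checking sharpness on \(x^d+y^d+z^d\) confirms that \(C_{2,1}=6\) is the right constant, but it is not evidence for the inequality itself. In short, what you have written is a reasonable research outline in which every genuinely hard step is either false as stated (the reduction) or postponed (the extremal problem); it is not a proof, and no proof of this statement is currently known.
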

Moreover, they show that those bounds are sharp.
\begin{rem}
	In fact, the bound for the hypersurface case, i.e. \((n+1)!p_g<\mu\) was already conjectured by K. Saito in 1983 \cite[Section 2 (iv), pg. 203]{saito-distribution}.
\end{rem}
Before continuing, let us introduce the following remarkable family of surface singularities.
\begin{defin}
	A surface singularity defined by the germ of function \(f:(\mathbb{C}^3,0)\rightarrow(\mathbb{C},0)\) with \(f=f_d+f_{d+1}+\cdots\) (where \(f_j\) is homogeneous of degree \(j\)) is called superisolated if the projective plane curve \(C_d:=\{f_d=0\}\subset\mathbb{P}^2\) is reduced with isolated singularities \(\{p_i\}_i\) and these points are not situated on the projective curve \(\{f_{d+1}=0\},\) where \(d\) is the degree of the initial term of \(f\).
\end{defin}
Superisolated surface singularities were first introduced by Luengo in \cite{luengomu} to show that the \(\mu\)--constant stratum in the miniversal deformation space of an isolated surface singularity, in general, is not smooth. Moreover, they are usually used to provide counterexamples to some conjectures in singularity theory. The following example shows a superisolated singularity which does not fulfill the bound \(4/3.\)
\begin{ex}
	Let us consider the superisolated surface singularity \[ f=x^{14}+y^6z^8+z^{14}+x^9z^5+(x+y+z)^{15}.\] We can compute with SINGULAR \cite{singular} that the Milnor number is \(\mu=2288\) and the Tjurina number is \(\tau=1660.\) Therefore, \(\mu/\tau>4/3.\)
	
	In this way, Theorem \ref{teorema4/3} is not true in general for surface singularities.
\end{ex} 
However, it is well known (see \cite{luengomelle}) that the geometric genus of a superisolated singularity can be expressed in terms of the degree \(d\) of the projective curve \(C_d.\) Also the Milnor number depends on the degree and of the local Milnor numbers of the singularities \(\{p_i\}_i\) of \(C_d.\)
\begin{ex}
	Let the germ of the function \(f:(\mathbb{C}^3,0)\rightarrow(\mathbb{C},0)\) with \(f=f_d+f_{d+1}+\cdots\) be a superisolated singularity. Let us denote by \(\mu_i\) the local Milnor numbers of the singular points \(\{p_i\}_i\) of the projective plane curve \(C_d:=\{f_d=0\}.\) Then we have (see \cite{luengomelle}) that
	
	\[p_g=d(d-1)(d-2)/6,\quad\mu=(d-1)^3+\sum_i\mu_i.\] 
	Therefore, it is easy to check that
	
	\[\frac{\mu}{\tau}<\frac{3}{2}.\]
\end{ex}
Also, in Wahl's paper \cite{wahl} it is given the following example which allow us to show that asymptotically \(\mu/\tau\) tends to \(3/2\) for the following family of surface singularities:
\begin{ex}
	Let us consider \(F(x,y,z)=x^d+y^d+z^d+g(x,y,z)=0\) with \(\mathrm{deg}(g)\geq d+1\). Then Example 4.7 in \cite{wahl} shows that \(\tau_{min}=(2d-3)(d+1)(d-1)/3.\) Here the minimal Tjurina number is defined as the minimal value among any Tjurina number of a positive weight deformation with fixed initial term \(x^d+y^d+z^d\).
	
	After that, it is easy to see that in this family we have
	
	\[\frac{\mu}{\tau_{\min}}\xrightarrow[d\rightarrow\infty]{}\frac{3}{2}.\]
\end{ex}
Therefore, we are under the conditions of Problem \ref{problem}. In fact, the cases where Durfee's conjecture \ref{durfeeconj} holds allow us to proof a more general result. Before stating the result, let us give first the following definitions.

\begin{defin}
	An \textit{absolutely isolated surface singularity} is a surface singularity which can be resolved after a finite number of point blowing ups.
\end{defin}
\begin{defin}
	Recall that the link \(K\) of an isolated hypersurface singularity is diffeomorphic to the boundary of the Milnor fiber. We say that the \textit{link is an integral homology sphere} if \(H_1(K;\mathbb{Z})=0.\)
\end{defin}

\begin{proposition}\label{boundsurface}
	Let \((X,0)\subset(\mathbb{C}^3,0)\) be an isolated hypersurface singularity of one of the following types:
	\begin{enumerate}
		\item[(1)] Quasi-homogeneous singularities,
		\item [(2)] \((X,0)\) of multiplicity \(3,\)
		\item [(3)] absolutely isolated singularity,
		\item [(4)] suspension of the type \(\{f(x,y)+z^N=0\},\)
		\item [(5)] the link of the singularity is an integral homology sphere,
		\item[(6)] the topological Euler characteristic of the exceptional divisor of the minimal resolution is positive.
	\end{enumerate}
	Then \[\frac{\mu}{\tau}<\frac{3}{2}.\]
\end{proposition}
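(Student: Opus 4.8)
The plan is to follow the strategy of Theorem~\ref{teorema4/3}, replacing Tomari's multiplicity-two bound by the appropriate Durfee-type inequality available in each of the six families and combining it with Wahl's Theorem~\ref{wahl}. Since $(X,0)\subset(\mathbb{C}^3,0)$ is a two-dimensional isolated complete intersection singularity, Theorem~\ref{wahl} applies and gives
\[
\mu-\tau\leq 2p_g-\dim H^1(A;\mathbb{C})\leq 2p_g .
\]
I would first record the elementary equivalence $\mu/\tau<3/2 \Longleftrightarrow \mu-\tau<\mu/3$. Hence, after invoking Wahl, it suffices to produce the strict Durfee inequality $6p_g<\mu$: from $6p_g<\mu$ one gets $2p_g<\mu/3$, and therefore $\mu-\tau\leq 2p_g<\mu/3$, which is exactly the desired bound.

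The six hypotheses are precisely the settings in which Durfee's Conjecture~\ref{durfeeconj} is known, so the core of the argument is a case-by-case citation of the results surveyed in Section~\ref{sec:durfee}. In case (1) the singularity is quasi-homogeneous, so by Saito \cite{saitohom} one has $\mu=\tau$ and $\mu/\tau=1<3/2$ at once (alternatively Xu--Yau \cite{xu} give the strict $6p_g\leq\mu-\operatorname{mult}(X,0)+1<\mu$). In case (2), Ashikaga's inequality $6p_g\leq\mu-2$ \cite{ashi} is already strict, so the combination above closes immediately. In cases (3)--(6) the theorems of Melle-Hern\'{a}ndez \cite{melle}, N\'{e}methi \cite{nemethi1,nemethi2}, K\'{o}llar--N\'{e}methi \cite{kollar} and Enokizono \cite{eno} respectively supply $6p_g\leq\mu$, and with Wahl this yields $\mu-\tau\leq 2p_g\leq\mu/3$, that is $\mu/\tau\leq 3/2$.

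The main obstacle is to upgrade this last inequality from $\leq$ to the strict $<$ in cases (3)--(6), where the quoted Durfee inequalities are a priori non-strict. I would dispose of this by splitting into three situations using the full Wahl bound. If $p_g=0$ then $\mu-\tau\leq -\dim H^1(A;\mathbb{C})\leq 0$ forces $\mu=\tau$, so the bound is trivial. If $p_g\geq 1$ and $\dim H^1(A;\mathbb{C})\geq 1$, then $\mu-\tau\leq 2p_g-1\leq \mu/3-1<\mu/3$ and we are again done. The only delicate case is $p_g\geq 1$ with $A$ a tree of rational curves, i.e. $\dim H^1(A;\mathbb{C})=0$, where a genuine strict Durfee inequality $6p_g<\mu$ is needed; here one must either inspect the equality cases in the proofs of \cite{melle,nemethi1,nemethi2,kollar,eno} or rule out the simultaneous equalities $6p_g=\mu$ and $\mu-\tau=2p_g$, and this is the technical heart of the proof. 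The asymptotic families of \cite{wahl} that only approach $3/2$ in the limit strongly suggest that this strict inequality indeed holds.
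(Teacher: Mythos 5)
Your proposal follows essentially the same route as the paper: case (1) is dispatched by Saito's result \(\mu=\tau\) for quasi-homogeneous singularities, and cases (2)--(6) by citing the known cases of Durfee's conjecture from \cite{ashi,melle,nemethi1,nemethi2,kollar,eno} and combining them with Wahl's Theorem~\ref{wahl}; the paper's proof is exactly this chain, written as \(\mu/\tau<\mu/(\mu-2p_g)<3/2\).

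The strictness issue you isolate in cases (3)--(6) is genuine, but you should know that the paper does not resolve it either: its proof simply asserts both strict inequalities after invoking Wahl and ``Durfee is true,'' even though Wahl only gives \(\mu-\tau\leq 2p_g-\dim H^1(A;\mathbb{C})\) and Conjecture~\ref{durfeeconj} as stated only gives \(6p_g\leq\mu\), so that a priori simultaneous equalities would yield \(\mu/\tau=3/2\). Your splitting into the cases \(p_g=0\), then \(p_g\geq 1\) with \(\dim H^1(A;\mathbb{C})\geq 1\), and finally the residual case of a rational exceptional tree with \(6p_g=\mu\) and \(\mu-\tau=2p_g\) is therefore \emph{more} careful than the published argument, and the residual case you honestly leave open (to be settled by inspecting equality cases in the cited references, several of which, like Ashikaga's \(6p_g\leq\mu-2\) and Xu--Yau's \(6p_g\leq\mu-\operatorname{mult}(X,0)+1\), are in fact strict) is left untreated in the paper as well. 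In short: same approach, and your proposal is at least as complete as the paper's own proof.
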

\begin{proof}
	For quasi-homogeneous singularities \(\mu=\tau\) by \cite{saitohom}.
	For the cases (2), (3), (4), (5), (6) Durfee conjecture is true by \cite{ashi,melle,nemethi1,nemethi2,kollar,eno}. 
	
	Therefore by Theorem \ref{wahl} we have that for these families
	
	\[\frac{\mu}{\tau}<\frac{\mu}{\mu-2p_g}<\frac{3}{2}.\]
	
\end{proof}

Finally, since Durfee's conjecture \ref{durfeeconj} is believed to be true for hypersurface singularities, as one can see from the huge number of families for which this inequality holds, the previous discussion allows us to propose the following conjecture:
\begin{conj}\label{conjetura3/2}
	For any \((X,0)\subset(\mathbb{C}^3,0)\) isolated hypersurface singularity:
	\[\frac{\mu}{\tau}<\frac{3}{2}.\]
\end{conj}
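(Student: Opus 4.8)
Since the statement concerns the case \(n=2,\ r=1\), the natural plan is to reproduce the mechanism behind Theorem \ref{teorema4/3} and Proposition \ref{boundsurface}, namely to derive \(\mu/\tau<3/2\) from a sharp comparison of the geometric genus \(p_g\) with the Milnor number. First I would apply Wahl's Theorem \ref{wahl} to \((X,0)\subset(\mathbb{C}^3,0)\), which gives
\[\mu-\tau\leq 2p_g-\dim H^1(A;\mathbb{C})\leq 2p_g.\]
Discarding the nonnegative correction term and writing \(\tau=\mu-(\mu-\tau)\) yields the clean lower bound \(\tau\geq\mu-2p_g\), which is the only input from deformation theory that the argument requires.

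The second step is purely algebraic: assuming the strict geometric genus bound \(6p_g<\mu\), one has \(\mu-2p_g>\mu-\tfrac{1}{3}\mu=\tfrac{2}{3}\mu>0\), hence \(\tau>\tfrac{2}{3}\mu\) and therefore
\[\frac{\mu}{\tau}<\frac{\mu}{(2/3)\mu}=\frac{3}{2}.\]
Thus the entire statement collapses to the strict inequality \(6p_g<\mu\), which is precisely the strict form of Durfee's Conjecture \ref{durfeeconj}, equivalently the case \(n=2\) of Saito's refined estimate \((n+1)!\,p_g<\mu\) \cite{saito-distribution} and of the Kerner–Némethi bounds \cite{kerner3}. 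For the families listed in Proposition \ref{boundsurface} this bound is available and the argument closes verbatim; to secure strictness at the boundary one invokes the dichotomy between the quasi-homogeneous case (where \(\mu=\tau\) gives ratio \(1\) directly, bypassing the genus bound) and the non-quasi-homogeneous case, where \(\mu>\tau\) together with \(6p_g<\mu\) propagates strictness into \(\mu/\tau<3/2\).

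The hard part is, of course, that outside the cases of Proposition \ref{boundsurface} the bound \(6p_g<\mu\) is exactly Durfee's long-standing open conjecture, so the reduction above can only produce a \emph{conditional} proof; this is why Conjecture \ref{conjetura3/2} must be stated as a conjecture rather than a theorem. An unconditional argument would have to supply \(6p_g<\mu\) by independent means, and the most promising route to me is to go through the Looijenga–Steenbrink formula (Theorem \ref{teolooi}), controlling the Hodge terms \(h^{p,0}\) and the resolution invariants \(a_1,a_2,a_3\) against \(p_g\), combined with Newton-nondegenerate or spectral estimates in the spirit of Kerner–Némethi. I expect the genuine obstacle to be obtaining the sharp constant \(6\) relating \(p_g\) and \(\mu\) for arbitrary non-homogeneous, Newton-degenerate hypersurface singularities in \(\mathbb{C}^3\), a case for which no general technique is currently known.
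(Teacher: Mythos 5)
Your reduction is exactly the paper's own argument: Proposition \ref{prop:durf3} derives Conjecture \ref{conjetura3/2} from Durfee's Conjecture \ref{durfeeconj} by combining Wahl's bound \(\mu-\tau\leq 2p_g\) (Theorem \ref{wahl}) with \(6p_g<\mu\), and Proposition \ref{boundsurface} closes the argument unconditionally for the families where Durfee's inequality is known, just as you describe. Your correct observation that the statement can only be proved conditionally (pending \(6p_g<\mu\) in general) matches the paper's stance in presenting it as a conjecture rather than a theorem.
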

\begin{proposition}\label{prop:durf3}
	Durfee's conjecture implies Conjecture \ref{conjetura3/2}.
\end{proposition}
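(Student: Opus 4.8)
The plan is to deduce the bound $\mu/\tau<3/2$ directly from the two estimates already assembled in this section: Wahl's upper bound for $\mu-\tau$ (Theorem \ref{wahl}), which in its weak form reads $\mu-\tau\le 2p_g$, and the lower bound for $\mu$ supplied by Durfee's conjecture \ref{durfeeconj}, namely $6p_g\le\mu$. The point is that these two bounds push $\tau$ upward and $p_g$ downward at the same time, and they meet precisely at the ratio $3/2$. I would first rewrite the target $\mu/\tau<3/2$ as $2\mu<3\tau$, so that it suffices to produce a lower bound of the shape $\tau\ge\tfrac23\mu$ with the appropriate strictness.

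The main step is then a short chain of inequalities. From Theorem \ref{wahl} one has $\tau\ge\mu-2p_g$, and from Durfee's conjecture one has $2p_g\le\mu/3$; substituting the latter into the former yields $\tau\ge\mu-\mu/3=\tfrac23\mu$. Noting that $2p_g\le\mu/3<\mu$ forces $\mu-2p_g>0$, so that the relevant denominators are positive, this can be packaged as
\[\frac{\mu}{\tau}\le\frac{\mu}{\mu-2p_g}\le\frac{3}{2},\]
which is exactly the chain already used in the proof of Proposition \ref{boundsurface}, now fed by the conjectural bound instead of by the special cases (2)--(6).

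The delicate point, and the one I expect to require the most care, is upgrading $\le$ to the \emph{strict} inequality demanded by Conjecture \ref{conjetura3/2}. If $(X,0)$ is quasihomogeneous then $\mu=\tau$ by \cite{saitohom} and the ratio equals $1<3/2$, so there is nothing to check. In the non-quasihomogeneous case one observes that an equality $\mu/\tau=3/2$ would force both estimates to be tight simultaneously: it gives $\mu-\tau=\mu/3$, whence Theorem \ref{wahl} yields $p_g\ge\mu/6$ while Durfee's bound gives $p_g\le\mu/6$, so that $6p_g=\mu$ and $\mu-\tau=2p_g$ both hold. Consequently the strict conclusion follows as soon as Durfee's bound is available in its strict form $6p_g<\mu$ (equivalently Saito's bound $(n+1)!\,p_g<\mu$ for $n=2$), since then $\mu/(\mu-2p_g)<3/2$ outright. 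I would therefore state the implication using this strict form, and flag the borderline configuration $6p_g=\mu$ as the single case that must be ruled out by hand; alternatively, one may invoke the full Wahl bound $\mu-\tau\le 2p_g-\dim H^1(A;\mathbb{C})$ to obtain strictness whenever $\dim H^1(A;\mathbb{C})>0$, reducing the remaining analysis to resolutions whose exceptional divisor has vanishing first Betti number.
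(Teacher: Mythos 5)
Your argument is correct and is essentially the paper's own proof: both combine Wahl's bound $\mu-\tau\le 2p_g$ (Theorem \ref{wahl}) with Durfee's bound on $p_g$ to obtain $\mu-\tau<\mu/3$ and hence $\mu/\tau<3/2$. The strictness issue you carefully isolate is resolved in the paper simply by reading Durfee's conjecture in its strict form (its proof begins ``Assume Durfee's conjecture is true, $6p_g<\mu$''), which is exactly the option you recommend, so the borderline configuration $6p_g=\mu$ you flag never needs to be ruled out separately.
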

\begin{proof}
	Assume Durfee's conjecture is true, \(6p_g<\mu.\) From Wahl's Theorem \ref{wahl}, we have the following bound \(\mu-\tau<2p_g<\mu/3.\) Then \(\mu/\tau<3/2.\)
\end{proof}

Despite Durfee's conjecture \ref{durfeeconj} is believed to be true and it is strongly supported, in general it is more difficult to compute the geometric genus of a family of surface singularities than its Milnor and Tjurina numbers. For this reason, Conjecture \ref{conjetura3/2} provides a good tool to check the validity of Durfee's conjecture \ref{durfeeconj} in the most complicated cases.

%%%%%%%%%%%%%%%%%%%%%%%%%%%%%%%%%%%%%%%%%%%%%%%%%%%%%%%%%%%%%%%%%%%%%%%%%%%%%%%%
\printbibliography

@article {alblanc,
    AUTHOR = {Almir\'{o}n, Patricio and Blanco, Guillem},
     TITLE = {A note on a question of {D}imca and {G}reuel},
   JOURNAL = {C. R. Math. Acad. Sci. Paris},
  FJOURNAL = {Comptes Rendus Math\'{e}matique. Acad\'{e}mie des Sciences. Paris},
    VOLUME = {357},
      YEAR = {2019},
    NUMBER = {2},
     PAGES = {205--208},
      ISSN = {1631-073X},
   MRCLASS = {32S50 (14H20)},
  MRNUMBER = {3927027},
       DOI = {10.1016/j.crma.2019.01.002},
       URL = {https://doi.org/10.1016/j.crma.2019.01.002},
}

@ARTICLE{taumin,
    author = "Maria Alberich-Carraminana and Patricio Almiron and Guillem Blanco and Alejandro Melle-Hernandez",
     title = "The minimal Tjurina number of irreducible germs of plane curve singularities",
   journal = "Indiana Univ. Math. J.",
  fjournal = "Indiana University Mathematics Journal",
    volume = 70,
      year = 2021,
     issue = 4,
     pages = "1211--1220",
      issn = "0022-2518",
     coden = "IUMJAB",
   mrclass = "14H20 (14H50, 32S05)",
   DOI = "10.1512/iumj.2021.70.8583",
   URL = "https://doi.org/10.1512/iumj.2021.70.8583",
}

@article {ashi,
    AUTHOR = {Ashikaga, Tadashi},
     TITLE = {Normal two-dimensional hypersurface triple points and the
              {H}orikawa type resolution},
   JOURNAL = {Tohoku Math. J. (2)},
  FJOURNAL = {The Tohoku Mathematical Journal. Second Series},
    VOLUME = {44},
      YEAR = {1992},
    NUMBER = {2},
     PAGES = {177--200},
      ISSN = {0040-8735},
   MRCLASS = {14J17 (14B05 32S10 32S25 32S55)},
  MRNUMBER = {1161610},
MRREVIEWER = {Tadashi Tomaru},
       DOI = {10.2748/tmj/1178227335},
       URL = {https://doi.org/10.2748/tmj/1178227335},
}

@article {brianon,
    AUTHOR = {Skoda, Henri and Brian\c{c}on, Jo\"{e}l},
     TITLE = {Sur la cl\^{o}ture int\'{e}grale d'un id\'{e}al de germes de fonctions
              holomorphes en un point de {${\bf C}^{n}$}},
   JOURNAL = {C. R. Acad. Sci. Paris S\'{e}r. A},
  FJOURNAL = {Comptes Rendus Hebdomadaires des S\'{e}ances de l'Acad\'{e}mie des
              Sciences. S\'{e}rie A. Sciences Math\'{e}matiques},
    VOLUME = {278},
      YEAR = {1974},
     PAGES = {949--951},
      ISSN = {0302-8429},
   MRCLASS = {32B10 (58C25)},
  MRNUMBER = {340642},
MRREVIEWER = {J. Stutz},
}

@article {brieskorn,
    AUTHOR = {Brieskorn, Egbert},
     TITLE = {Die {M}onodromie der isolierten {S}ingularit\"{a}ten von
              {H}yperfl\"{a}chen},
   JOURNAL = {Manuscripta Math.},
  FJOURNAL = {Manuscripta Mathematica},
    VOLUME = {2},
      YEAR = {1970},
     PAGES = {103--161},
      ISSN = {0025-2611},
   MRCLASS = {57.60},
  MRNUMBER = {267607},
MRREVIEWER = {J. P. Levine},
       DOI = {10.1007/BF01155695},
       URL = {https://doi.org/10.1007/BF01155695},
}

@article {buchgreu,
    AUTHOR = {Buchweitz, Ragnar-Olaf and Greuel, Gert-Martin},
     TITLE = {The {M}ilnor number and deformations of complex curve
              singularities},
   JOURNAL = {Invent. Math.},
  FJOURNAL = {Inventiones Mathematicae},
    VOLUME = {58},
      YEAR = {1980},
    NUMBER = {3},
     PAGES = {241--281},
      ISSN = {0020-9910},
   MRCLASS = {14B07 (14H15)},
  MRNUMBER = {571575},
MRREVIEWER = {Jonathan M. Wahl},
       DOI = {10.1007/BF01390254},
       URL = {https://doi.org/10.1007/BF01390254},
}

@misc {singular,
 title = {{\sc Singular} {4-2-1} --- {A} computer algebra system for polynomial computations},
 author = {Decker, Wolfram and Greuel, Gert-Martin and Pfister, Gerhard and Sch\"onemann, Hans},
 year = {2021},
 howpublished = {\url{http://www.singular.uni-kl.de}},
}

@article {durfee,
    AUTHOR = {Durfee, Alan H.},
     TITLE = {The signature of smoothings of complex surface singularities},
   JOURNAL = {Math. Ann.},
  FJOURNAL = {Mathematische Annalen},
    VOLUME = {232},
      YEAR = {1978},
    NUMBER = {1},
     PAGES = {85--98},
      ISSN = {0025-5831},
   MRCLASS = {32C40},
  MRNUMBER = {466620},
MRREVIEWER = {H. B. Laufer},
       DOI = {10.1007/BF01420624},
       URL = {https://doi.org/10.1007/BF01420624},
}

@article {dim,
    AUTHOR = {Dimca, Alexandru and Greuel, Gert-Martin},
     TITLE = {On 1-forms on isolated complete intersection curve
              singularities},
   JOURNAL = {J. Singul.},
  FJOURNAL = {Journal of Singularities},
    VOLUME = {18},
      YEAR = {2018},
     PAGES = {114--118},
   MRCLASS = {14H20 (14H50 32S05)},
  MRNUMBER = {3899537},
MRREVIEWER = {Marcelo Escudeiro Hernandes},
       DOI = {10.1007/bf01405360},
       URL = {https://doi.org/10.1007/bf01405360},
}

@misc{eno,
      title={Slope equality of plane curve fibrations and its application to {D}urfee's conjecture}, 
      author={Makoto Enokizono},
      year={2018},
      eprint={1704.08806},
      archivePrefix={arXiv},
      primaryClass={math.AG}
}

@article {genzmertau,
    AUTHOR = {Genzmer, Y. and Hernandes, M. E.},
     TITLE = {On the {S}aito basis and the {T}jurina number for plane
              branches},
   JOURNAL = {Trans. Amer. Math. Soc.},
  FJOURNAL = {Transactions of the American Mathematical Society},
    VOLUME = {373},
      YEAR = {2020},
    NUMBER = {5},
     PAGES = {3693--3707},
      ISSN = {0002-9947},
   MRCLASS = {14H50 (14B05 32S05)},
  MRNUMBER = {4082253},
MRREVIEWER = {P. Schenzel},
       DOI = {10.1090/tran/8019},
       URL = {https://doi.org/10.1090/tran/8019},
}

@article{genzmer16,
    author = {Genzmer, Yohann},
    title = {Dimension of the Moduli Space of a Germ of Curve in $\mathbb{C}^2$},
    journal = {International Mathematics Research Notices},
    year = {2020},
    month = {09},
    issn = {1073-7928},
    doi = {10.1093/imrn/rnaa237},
    url = {https://doi.org/10.1093/imrn/rnaa237},
    note = {rnaa237},
    eprint = {https://academic.oup.com/imrn/advance-article-pdf/doi/10.1093/imrn/rnaa237/33780523/rnaa237.pdf},
}

@Article{greuel2,
  author     = {Greuel, Gert-Martin},
  journal    = {Math. Ann.},
  title      = {Der {G}auss-{M}anin-{Z}usammenhang isolierter {S}ingularit\"{a}ten von vollst\"{a}ndigen {D}urchschnitten},
  year       = {1975},
  issn       = {0025-5831},
  pages      = {235--266},
  volume     = {214},
  doi        = {10.1007/BF01352108},
  fjournal   = {Mathematische Annalen},
  mrclass    = {14B05 (32C40)},
  mrnumber   = {396554},
  mrreviewer = {Helmut Hamm},
  url        = {https://doi.org/10.1007/BF01352108},
}

@article {greuel,
    AUTHOR = {Greuel, Gert-Martin},
     TITLE = {Dualit\"{a}t in der lokalen {K}ohomologie isolierter
              {S}ingularit\"{a}ten},
   JOURNAL = {Math. Ann.},
  FJOURNAL = {Mathematische Annalen},
    VOLUME = {250},
      YEAR = {1980},
    NUMBER = {2},
     PAGES = {157--173},
      ISSN = {0025-5831},
   MRCLASS = {32B30 (14B07)},
  MRNUMBER = {582515},
MRREVIEWER = {Jonathan M. Wahl},
       DOI = {10.1007/BF01364456},
       URL = {https://doi.org/10.1007/BF01364456},
}

@incollection {greuel-survey,
    AUTHOR = {Greuel, Gert-Martin},
     TITLE = {Deformation and smoothing of singularities},
 BOOKTITLE = {Handbook of geometry and topology of singularities. {I}},
     PAGES = {389--448},
 PUBLISHER = {Springer, Cham},
      YEAR = {[2020] \copyright 2020},
   MRCLASS = {32S30 (32S45)},
  MRNUMBER = {4261557},
       DOI = {10.1007/978-3-030-53061-7\_7},
       URL = {https://doi.org/10.1007/978-3-030-53061-7_7},
}

@article {kerner1,
    AUTHOR = {Kerner, Dmitry and N\'{e}methi, Andr\'{a}s},
     TITLE = {A counterexample to {D}urfee's conjecture},
   JOURNAL = {C. R. Math. Acad. Sci. Soc. R. Can.},
  FJOURNAL = {Comptes Rendus Math\'{e}matiques de l'Acad\'{e}mie des Sciences. La
              Soci\'{e}t\'{e} Royale du Canada. Mathematical Reports of the Academy
              of Science. The Royal Society of Canada},
    VOLUME = {34},
      YEAR = {2012},
    NUMBER = {2},
     PAGES = {50--64},
      ISSN = {0706-1994},
   MRCLASS = {32S10 (14B05 14M10)},
  MRNUMBER = {2954042},
MRREVIEWER = {Guangfeng Jiang},
}

@article {kerner2,
    AUTHOR = {Kerner, Dmitry and N\'{e}methi, Andr\'{a}s},
     TITLE = {The `corrected {D}urfee's inequality' for homogeneous complete
              intersections},
   JOURNAL = {Math. Z.},
  FJOURNAL = {Mathematische Zeitschrift},
    VOLUME = {274},
      YEAR = {2013},
    NUMBER = {3-4},
     PAGES = {1385--1400},
      ISSN = {0025-5874},
   MRCLASS = {32S55 (32S50)},
  MRNUMBER = {3078271},
MRREVIEWER = {Guangfeng Jiang},
       DOI = {10.1007/s00209-012-1120-y},
       URL = {https://doi.org/10.1007/s00209-012-1120-y},
}

@article {kerner3,
    AUTHOR = {Kerner, Dmitry and N\'{e}methi, Andr\'{a}s},
     TITLE = {Durfee-type bound for some non-degenerate complete
              intersection singularities},
   JOURNAL = {Math. Z.},
  FJOURNAL = {Mathematische Zeitschrift},
    VOLUME = {285},
      YEAR = {2017},
    NUMBER = {1-2},
     PAGES = {159--175},
      ISSN = {0025-5874},
   MRCLASS = {32S25 (32S50 32S55)},
  MRNUMBER = {3598808},
MRREVIEWER = {Huai Qing Zuo},
       DOI = {10.1007/s00209-016-1702-1},
       URL = {https://doi.org/10.1007/s00209-016-1702-1},
}

@article {kollar,
    AUTHOR = {Koll\'{a}r, J\'{a}nos and N\'{e}methi, Andr\'{a}s},
     TITLE = {Durfee's conjecture on the signature of smoothings of surface
              singularities},
      NOTE = {With an appendix by Tommaso de Fernex},
   JOURNAL = {Ann. Sci. \'{E}c. Norm. Sup\'{e}r. (4)},
  FJOURNAL = {Annales Scientifiques de l'\'{E}cole Normale Sup\'{e}rieure. Quatri\`eme
              S\'{e}rie},
    VOLUME = {50},
      YEAR = {2017},
    NUMBER = {3},
     PAGES = {787--798},
      ISSN = {0012-9593},
   MRCLASS = {32S25 (14B05)},
  MRNUMBER = {3665555},
MRREVIEWER = {Meirav Topol-Amram},
       DOI = {10.24033/asens.2332},
       URL = {https://doi.org/10.24033/asens.2332},
}

@article {luengomu,
    AUTHOR = {Luengo, Ignacio},
     TITLE = {The {$\mu$}-constant stratum is not smooth},
   JOURNAL = {Invent. Math.},
  FJOURNAL = {Inventiones Mathematicae},
    VOLUME = {90},
      YEAR = {1987},
    NUMBER = {1},
     PAGES = {139--152},
      ISSN = {0020-9910},
   MRCLASS = {32C40 (32C45)},
  MRNUMBER = {906582},
MRREVIEWER = {Vasile Br\^{\i}nz\u{a}nescu},
       DOI = {10.1007/BF01389034},
       URL = {https://doi.org/10.1007/BF01389034},
}

@article {luengomelle,
    AUTHOR = {Luengo-Velasco, I. and Melle-Hern\'{a}ndez, A. and N\'{e}methi, A.},
     TITLE = {Links and analytic invariants of superisolated singularities},
   JOURNAL = {J. Algebraic Geom.},
  FJOURNAL = {Journal of Algebraic Geometry},
    VOLUME = {14},
      YEAR = {2005},
    NUMBER = {3},
     PAGES = {543--565},
      ISSN = {1056-3911},
   MRCLASS = {32S50 (14B05 32S25 57M27 57R57)},
  MRNUMBER = {2129010},
MRREVIEWER = {Nikolai N. Saveliev},
       DOI = {10.1090/S1056-3911-05-00397-8},
       URL = {https://doi.org/10.1090/S1056-3911-05-00397-8},
}

@article {liu,
    AUTHOR = {Liu, Yongqiang},
     TITLE = {Milnor and {T}jurina numbers for a hypersurface germ with
              isolated singularity},
   JOURNAL = {C. R. Math. Acad. Sci. Paris},
  FJOURNAL = {Comptes Rendus Math\'{e}matique. Acad\'{e}mie des Sciences. Paris},
    VOLUME = {356},
      YEAR = {2018},
    NUMBER = {9},
     PAGES = {963--966},
      ISSN = {1631-073X},
   MRCLASS = {32S50 (14B05)},
  MRNUMBER = {3849082},
       DOI = {10.1016/j.crma.2018.07.004},
       URL = {https://doi.org/10.1016/j.crma.2018.07.004},
}

@article {looi,
    AUTHOR = {Looijenga, Eduard and Steenbrink, Joseph},
     TITLE = {Milnor number and {T}jurina number of complete intersections},
   JOURNAL = {Math. Ann.},
  FJOURNAL = {Mathematische Annalen},
    VOLUME = {271},
      YEAR = {1985},
    NUMBER = {1},
     PAGES = {121--124},
      ISSN = {0025-5831},
   MRCLASS = {32B30 (14B05)},
  MRNUMBER = {779609},
MRREVIEWER = {Jonathan M. Wahl},
       DOI = {10.1007/BF01455800},
       URL = {https://doi.org/10.1007/BF01455800},
}

@article {melle,
    AUTHOR = {Melle-Hern\'{a}ndez, A.},
     TITLE = {Milnor numbers for surface singularities},
   JOURNAL = {Israel J. Math.},
  FJOURNAL = {Israel Journal of Mathematics},
    VOLUME = {115},
      YEAR = {2000},
     PAGES = {29--50},
      ISSN = {0021-2172},
   MRCLASS = {32S25 (32S45 32S50)},
  MRNUMBER = {1749672},
MRREVIEWER = {Enrique Artal Bartolo},
       DOI = {10.1007/BF02810579},
       URL = {https://doi.org/10.1007/BF02810579},
}

@InProceedings{merleteissier,
author="Merle, M.
and Teissier, B.",
editor="Demazure, Michel
and Pinkham, Henry Charles
and Teissier, Bernard",
title="Conditions d'adjonction, d'apr{\`e}s du val",
booktitle="S{\'e}minaire sur les Singularit{\'e}s des Surfaces",
year="1980",
publisher="Springer Berlin Heidelberg",
address="Berlin, Heidelberg",
pages="229--245",
isbn="978-3-540-38586-8"
}

@article {nemethi1,
    AUTHOR = {N\'{e}methi, Andr\'{a}s},
     TITLE = {Dedekind sums and the signature of {$f(x,y)+z^N$}},
   JOURNAL = {Selecta Math. (N.S.)},
  FJOURNAL = {Selecta Mathematica. New Series},
    VOLUME = {4},
      YEAR = {1998},
    NUMBER = {2},
     PAGES = {361--376},
      ISSN = {1022-1824},
   MRCLASS = {32S55 (11F20 32S25)},
  MRNUMBER = {1669948},
MRREVIEWER = {Jan Stevens},
       DOI = {10.1007/s000290050035},
       URL = {https://doi.org/10.1007/s000290050035},
}

@article {nemethi2,
    AUTHOR = {N\'{e}methi, Andr\'{a}s},
     TITLE = {Dedekind sums and the signature of {$f(x,y)+z^N$}. {II}},
   JOURNAL = {Selecta Math. (N.S.)},
  FJOURNAL = {Selecta Mathematica. New Series},
    VOLUME = {5},
      YEAR = {1999},
    NUMBER = {1},
     PAGES = {161--179},
      ISSN = {1022-1824},
   MRCLASS = {32S25 (11F20 32S55)},
  MRNUMBER = {1694898},
MRREVIEWER = {Jan Stevens},
       DOI = {10.1007/s000290050045},
       URL = {https://doi.org/10.1007/s000290050045},
}

@article {pfister,
    AUTHOR = {Pfister, Gerhard and Sch\"{o}nemann, Hans},
     TITLE = {Singularities with exact {P}oincar\'{e} complex but not
              quasihomogeneous},
   JOURNAL = {Rev. Mat. Univ. Complut. Madrid},
  FJOURNAL = {Revista Matem\'{a}tica de la Universidad Complutense de Madrid},
    VOLUME = {2},
      YEAR = {1989},
    NUMBER = {2-3},
     PAGES = {161--171},
      ISSN = {0214-3577},
   MRCLASS = {32S25 (14B05)},
  MRNUMBER = {1031692},
MRREVIEWER = {Jonathan M. Wahl},
}

@article {saitohom,
    AUTHOR = {Saito, Kyoji},
     TITLE = {Quasihomogene isolierte {S}ingularit\"{a}ten von {H}yperfl\"{a}chen},
   JOURNAL = {Invent. Math.},
  FJOURNAL = {Inventiones Mathematicae},
    VOLUME = {14},
      YEAR = {1971},
     PAGES = {123--142},
      ISSN = {0020-9910},
   MRCLASS = {32C40},
  MRNUMBER = {294699},
MRREVIEWER = {S. Hitotumatu},
       DOI = {10.1007/BF01405360},
       URL = {https://doi.org/10.1007/BF01405360},
}

@incollection {saito-distribution,
    AUTHOR = {Saito, Kyoji},
     TITLE = {The zeroes of characteristic function {$\chi_f$} for the
              exponents of a hypersurface isolated singular point},
 BOOKTITLE = {Algebraic varieties and analytic varieties ({T}okyo, 1981)},
    SERIES = {Adv. Stud. Pure Math.},
    VOLUME = {1},
     PAGES = {195--217},
 PUBLISHER = {North-Holland, Amsterdam},
      YEAR = {1983},
   MRCLASS = {32C40 (14B05)},
  MRNUMBER = {715651},
MRREVIEWER = {M. Sebastiani},
       DOI = {10.2969/aspm/00110195},
       URL = {https://doi.org/10.2969/aspm/00110195},
}

@article {sebastiani,
    AUTHOR = {Sebastiani, Marcos},
     TITLE = {Preuve d'une conjecture de {B}rieskorn},
   JOURNAL = {Manuscripta Math.},
  FJOURNAL = {Manuscripta Mathematica},
    VOLUME = {2},
      YEAR = {1970},
     PAGES = {301--308},
      ISSN = {0025-2611},
   MRCLASS = {57.60},
  MRNUMBER = {267608},
MRREVIEWER = {J. P. Levine},
       DOI = {10.1007/BF01168382},
       URL = {https://doi.org/10.1007/BF01168382},
}

@article {tomari,
    AUTHOR = {Tomari, Masataka},
     TITLE = {The inequality {$8p_g<\mu$} for hypersurface
              two-dimensional isolated double points},
   JOURNAL = {Math. Nachr.},
  FJOURNAL = {Mathematische Nachrichten},
    VOLUME = {164},
      YEAR = {1993},
     PAGES = {37--48},
      ISSN = {0025-584X},
   MRCLASS = {32S25 (14J17 32S45)},
  MRNUMBER = {1251454},
       DOI = {10.1002/mana.19931640105},
       URL = {https://doi.org/10.1002/mana.19931640105},
}

@article {Vosegaard,
    AUTHOR = {Vosegaard, Henrik},
     TITLE = {A characterization of quasi-homogeneous complete intersection
              singularities},
   JOURNAL = {J. Algebraic Geom.},
  FJOURNAL = {Journal of Algebraic Geometry},
    VOLUME = {11},
      YEAR = {2002},
    NUMBER = {3},
     PAGES = {581--597},
      ISSN = {1056-3911},
   MRCLASS = {32S35 (14B05 32S05)},
  MRNUMBER = {1894939},
MRREVIEWER = {Alexandru Dimca},
       DOI = {10.1090/S1056-3911-02-00298-9},
       URL = {https://doi.org/10.1090/S1056-3911-02-00298-9},
}

@article {wahl,
    AUTHOR = {Wahl, Jonathan M.},
     TITLE = {A characterization of quasihomogeneous {G}orenstein surface
              singularities},
   JOURNAL = {Compositio Math.},
  FJOURNAL = {Compositio Mathematica},
    VOLUME = {55},
      YEAR = {1985},
    NUMBER = {3},
     PAGES = {269--288},
      ISSN = {0010-437X},
   MRCLASS = {32B30 (14B07 14J17 32G11)},
  MRNUMBER = {799816},
MRREVIEWER = {Ulrich Karras},
       URL = {http://www.numdam.org/item?id=CM_1985__55_3_269_0},
}

@article {wang,
    AUTHOR = {Wang, Zhenjian},
     TITLE = {Monotonic invariants under blowups},
   JOURNAL = {Internat. J. Math.},
  FJOURNAL = {International Journal of Mathematics},
    VOLUME = {31},
      YEAR = {2020},
    NUMBER = {12},
     PAGES = {2050093, 14},
      ISSN = {0129-167X},
   MRCLASS = {14H15 (14B10 32S10)},
  MRNUMBER = {4184425},
       DOI = {10.1142/S0129167X20500937},
       URL = {https://doi.org/10.1142/S0129167X20500937},
}

@article {xu,
    AUTHOR = {Xu, Yi-Jing and Yau, Stephen S.-T.},
     TITLE = {Durfee conjecture and coordinate free characterization of
              homogeneous singularities},
   JOURNAL = {J. Differential Geom.},
  FJOURNAL = {Journal of Differential Geometry},
    VOLUME = {37},
      YEAR = {1993},
    NUMBER = {2},
     PAGES = {375--396},
      ISSN = {0022-040X},
   MRCLASS = {32S25 (11H06 32S50 52C07)},
  MRNUMBER = {1205449},
MRREVIEWER = {Martin Henk},
       URL = {http://projecteuclid.org/euclid.jdg/1214453681},
}

@article {yau,
    AUTHOR = {Yau, Stephen S. T.},
     TITLE = {Various numerical invariants for isolated singularities},
   JOURNAL = {Amer. J. Math.},
  FJOURNAL = {American Journal of Mathematics},
    VOLUME = {104},
      YEAR = {1982},
    NUMBER = {5},
     PAGES = {1063--1100},
      ISSN = {0002-9327},
   MRCLASS = {32B30 (32C35)},
  MRNUMBER = {675310},
MRREVIEWER = {Ulrich Karras},
       DOI = {10.2307/2374084},
       URL = {https://doi.org/10.2307/2374084},
}

@misc {teissier-appendix,
 title = {Appendix},
 author = {Teissier, Bernard},
 year = {1986},
 howpublished = {in \cite{zariski}},
}

@article {tjurinaoriginal,
    AUTHOR = {Tjurina, G. N.},
     TITLE = {Locally semi-universal flat deformations of isolated
              singularities of complex spaces},
   JOURNAL = {Izv. Akad. Nauk SSSR Ser. Mat.},
  FJOURNAL = {Izvestiya Akademii Nauk SSSR. Seriya Matematicheskaya},
    VOLUME = {33},
      YEAR = {1969},
     PAGES = {1026--1058},
      ISSN = {0373-2436},
   MRCLASS = {32.50},
  MRNUMBER = {0252685},
MRREVIEWER = {A. Morimoto},
}

@article {lemilnor,
    AUTHOR = {L\^{e}, Dung Tr\'{a}ng},
     TITLE = {Computation of the {M}ilnor number of an isolated singularity
              of a complete intersection},
   JOURNAL = {Funkcional. Anal. i Prilo\v{z}en.},
  FJOURNAL = {Akademija Nauk SSSR. Funkcional\cprime nyi Analiz i ego Prilo\v{z}enija},
    VOLUME = {8},
      YEAR = {1974},
    NUMBER = {2},
     PAGES = {45--49},
      ISSN = {0374-1990},
   MRCLASS = {32C40 (14B05 14D05)},
  MRNUMBER = {0350064},
MRREVIEWER = {I. N. Iomdin},
}

@book {milnorbook,
    AUTHOR = {Milnor, John},
     TITLE = {Singular points of complex hypersurfaces},
    SERIES = {Annals of Mathematics Studies, No. 61},
 PUBLISHER = {Princeton University Press, Princeton, N.J.; University of
              Tokyo Press, Tokyo},
      YEAR = {1968},
     PAGES = {iii+122},
   MRCLASS = {57.20 (14.00)},
  MRNUMBER = {0239612},
MRREVIEWER = {J. P. Levine},
}

@article {peraire,
    AUTHOR = {Peraire, Rosa},
     TITLE = {Tjurina number of a generic irreducible curve singularity},
   JOURNAL = {J. Algebra},
  FJOURNAL = {Journal of Algebra},
    VOLUME = {196},
      YEAR = {1997},
    NUMBER = {1},
     PAGES = {114--157},
      ISSN = {0021-8693},
   MRCLASS = {32S15 (14B12 14H20 32S30)},
  MRNUMBER = {1474166},
MRREVIEWER = {Gerhard Pfister},
       DOI = {10.1006/jabr.1997.7073},
       URL = {https://doi.org/10.1006/jabr.1997.7073},
}

@article {hamm,
    AUTHOR = {Hamm, Helmut},
     TITLE = {Lokale topologische {E}igenschaften komplexer {R}\"{a}ume},
   JOURNAL = {Math. Ann.},
  FJOURNAL = {Mathematische Annalen},
    VOLUME = {191},
      YEAR = {1971},
     PAGES = {235--252},
      ISSN = {0025-5831},
   MRCLASS = {57.60 (32.00)},
  MRNUMBER = {286143},
MRREVIEWER = {K. Lamotke},
       DOI = {10.1007/BF01578709},
       URL = {https://doi.org/10.1007/BF01578709},
}
%%%%%%%%%%%%%%%%%%%%%%%%%%%%%%%%%%%%%%%%%%%%%%%%%%%%%%%%%%%%%%%%%%%%%%%%%%%%%%%%

\end{document}